\documentclass[12pt,twoside]{article}
\usepackage{a4}
\usepackage{amssymb,amsmath,amsthm,latexsym}
\usepackage{amsfonts}
\usepackage{amsfonts}
\usepackage{graphicx}
\usepackage{nicematrix}
\usepackage{multirow}

\newtheorem{theorem}{Theorem}[section]

\newtheorem{corollary}[theorem] {Corollary}
\newtheorem{definition}[theorem]{Definition}
\newtheorem{example}[theorem]{Example}
\newtheorem{lemma} [theorem]{Lemma}

\newtheorem{proposition}[theorem]{Proposition}
\newtheorem{remark}[theorem]{Remark}

\usepackage{longtable}
\usepackage{caption}

\voffset=-12mm
\mathsurround=2pt
\parindent=12pt
\parskip= 4.5 pt
\lineskip=3pt
\oddsidemargin=10mm
\evensidemargin=10mm
\topmargin=55pt
\headheight=12pt
\footskip=30pt
\textheight 8.1in
\textwidth=150mm
\raggedbottom
\pagestyle{myheadings}
\hbadness = 10000
\tolerance = 10000

\vspace{5cm}

\begin{document}

\label{'ubf'}  
\setcounter{page}{1}                                 %Put here the starting page number

\markboth {\hspace*{-9mm} \centerline{\footnotesize \sc
	% Put here the left page top label 
	\scriptsize { On the characterization of Eulerian $es$-splitting $p$-matroids}}
}
{ \centerline                           {\footnotesize \small  
	%put here the author's name
	 Uday Jagadale$^1,$ Prashant Malavadkar$^2,$ Sachin Gunjal$^3,$ M.M.Shikare$^4,$B.N. Waphare$^5$                                                  } \hspace*{-9mm}              
}

\begin{center}
{ 
	{\Large \textbf { \sc On the characterization of Eulerian $es$-splitting $p$-matroids}
	}
	\\

	\medskip

	{\sc Uday Jagadale$^1,$ Prashant Malavadkar$^{2 *},$ Sachin Gunjal$^3,$ and M.M.Shikare$^4$  }\\
	{\footnotesize  $^{1,2,3}$Dr. Vishwanath Karad MIT World Peace University,
		Pune 411 038,	India.\\
	$^{4}$Savitribai Phule Pune University, Pune-411007, India }\\
	{\footnotesize e-mail: {\it 1.uday.jagdale@mitwpu.edu.in, 2.prashant.malavadkar@mitwpu.edu.in, 3.sachin.gunjal@mitwpu.edu.in, 4.mmshikare@gmail.com}}
}
\end{center}

\thispagestyle{empty}

\hrulefill

\begin{abstract}  
{\footnotesize 
\noindent The $es$-splitting operation on binary bridge-less matroids never produces an Eulerian matroid. But for matroids representable over $GF(p),(p>2),$ called $p$-matroids, the $es$-splitting operation may yield Eulerian matroids. In this work, we introduce $es$-splitting operation for $p$-matroids and characterize a class of $p$-matroids yielding Eulerian matroids after the $es$-splitting operation. Characterization of circuits, and bases of the resulting matroid, after the $es$-splitting operation, in terms of circuits, and bases of the original matroid, respectively, are discussed. We also proved that the $es$-splitting operation on $p$-matroids preserves connectivity and 3-connectedness. Sufficient condition to obtain Hamiltonian $p$-matroid from Hamiltonian $p$-matroid under $es$-splitting operation is also provided.}
\end{abstract}
\hrulefill

{\small \textbf{Keywords:} $p$-matroid; $es$-splitting operation; 3-connected matroid; Eulerian matroid; Hamiltonian matroid}

\indent {\small {\bf AMS Subject Classification:} 05B35; 05C50; 05C83 }

\section{Introduction}

The $n$- line splitting operation for graphs was introduced by Slater \cite{slt1} and it was extended to binary matroids by Azanchiler \cite{AH1}. Properties of splitting, element splitting, and $es$ -splitting operations on binary matroids are extensively discussed by several authors \cite{AH2,DMS,Mills, rsw,ma, shika,ba}.
Malavadkar et al. \cite{mjg, elt} defined splitting and element splitting operations for simple and coloopless $p$-matroids, i.e. for matroids representable over $GF(p).$

\begin{definition}\label{def0}
	Let $M\cong M[A]$ be a $p$-matroid on ground set $E,$$\{a,b\} \subset E,$ and $\alpha\neq 0$ in $ GF(p)$. The matrix $A_{a,b}$ is constructed  from $A$ by appending an extra row to $A$ which has coordinates equal to $\alpha$ in the columns corresponding to the elements $a,$$b,$ and zero elsewhere. Define the splitting matroid $M_{a,b}$ to be the vector matroid  $M[A_{a,b}].$ The transformation of $M$ to $M_{a,b}$ is called  splitting operation.
\end{definition}

\begin{definition}\label{def1}
	
	Let $M\cong M[A]$ be a $p$-matroid on ground set $E,$$\{a,b\} \subset E,$ and $M_{a,b}$ be the corresponding splitting matroid. Let the matrix $A_{a,b}$ represents $M_{a,b}$ on $GF(p).$ Construct the matrix $A'_{a,b}$ from $A_{a,b}$ by adding an extra column to $A_{a,b},$labeled as $z,$ which has the last coordinate equal to $\alpha\neq 0$ and the rest are equal to zero. Define the element splitting matroid $M'_{a,b}$ to be the vector matroid  $M[A'_{a,b}]$. The transformation of $M$ to $M'_{a,b}$ is called element splitting operation.
	
\end{definition}

\noindent A matroid $L$ is a lift of the matroid $M,$ if there exists a matroid $N,$ and $X\subset E(N)$ such that $N/X=M,$ and $N \setminus X=L.$ If $|X|=1,$ then $L$ is called an elementary lift of $M.$ In the next result, Mundhe et al. \cite{mundhe} proved that for binary matroid $M$ its elementary lift $L$ and splitting matroid $M_T$ are isomorphic.
A study on lifted graphic matroids is carried out in \cite{CG, CW, FM}. 
\begin{lemma}
	Let $M$ and $L$ be binary matroids. Then $L$ is an elementary lift of $M$ if and only if $L$ is isomorphic to $M_T$ for some $T\subset E(M).$ 
\end{lemma}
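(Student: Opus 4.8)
The plan is to prove both implications by passing to $GF(2)$-representations and reading the splitting operation directly off the representing matrix; throughout I take the intermediate matroid $N$ to be binary, so that $M$, $L$, and $N$ all admit representations over $GF(2)$. Fix a representation $A$ of $M$, a $k\times n$ matrix with $k=r(M)$, whose rows span the cocircuit space of $M$. The splitting matroid $M_T$ is then $M[A_T]$, where $A_T$ is $A$ with one extra row $w_T$ equal to the incidence vector of $T\subseteq E$ (this is exactly Definition~\ref{def0} with $\alpha=1$, generalised from a pair to an arbitrary set).

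For the easier implication, suppose $L\cong M_T$. I would exhibit the lift explicitly by setting $N=M[A_N]$ with $A_N=\begin{pmatrix} A & \mathbf{0} \\ w_T & 1\end{pmatrix}$, where the new column is labelled $e$. Contracting $e$ is immediate because its column is already the last standard basis vector: pivoting on the $1$ and deleting that row and column returns $A$, so $N/e=M$. Deleting $e$ removes the last column and leaves $A_T$, so $N\setminus e=M_T\cong L$. Hence $L$ is an elementary lift of $M$ with $X=\{e\}$.

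For the converse, suppose $L$ is an elementary lift, so there is a binary $N$ and $e\in E(N)$ with $N/e=M$ and $N\setminus e=L$. If $e$ is a loop of $N$ then $N/e=N\setminus e$, giving $M=L=M_\emptyset$, and we take $T=\emptyset$. Otherwise the column $c_e$ is nonzero, and after row operations and a row permutation I would bring a representation of $N$ to the form $\left(\begin{smallmatrix} A' & \mathbf 0 \\ w & 1\end{smallmatrix}\right)$ with $c_e=(0,\dots,0,1)^{\mathsf T}$. By the contraction algorithm the top block $A'$ represents $N/e=M$, while deleting the column $e$ shows $\left(\begin{smallmatrix} A' \\ w\end{smallmatrix}\right)$ represents $N\setminus e=L$. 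Setting $T=\operatorname{supp}(w)$ gives $\left(\begin{smallmatrix} A' \\ w\end{smallmatrix}\right)=A'_T$, hence $L=M[A'_T]=M_T$, as required. This simultaneously covers the coloop case, where $N/e=N\setminus e$ forces $w$ to lie in the row space of $A'$ and $M_T=M$.

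The main obstacle is the converse, and it is precisely the normalization step: one must argue that $c_e$ can be cleared to a standard basis vector without disturbing the identity $N/e=M$, i.e.\ that the top $k$ rows of the normalized matrix genuinely represent $M$ and not merely a matroid of the same rank. I would justify this by the standard fact that contracting a non-loop $e$ is effected on a representing matrix by pivoting on a nonzero entry of $c_e$ and deleting the pivot row together with the column $e$. The remaining subtlety, which must be flagged, is that binarity of $N$ is essential here: for a general (non-binary) intermediate matroid the statement fails — for instance $U_{2,3}$ is a lift of $U_{1,3}$ through $U_{2,4}$ yet is not isomorphic to any splitting of $U_{1,3}$ — so the hypothesis that the lift is taken inside the class of binary matroids is used exactly at this point.
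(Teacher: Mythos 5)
The paper does not actually prove this lemma: it is imported from Mundhe, Borse and Dalvi \cite{mundhe}, with only the remark that the same arguments extend to $p$-matroids. So there is no in-paper proof to compare against; judged on its own, your representation-theoretic argument is correct and is the natural (and almost certainly the cited) route: normalise the column of the lifted element $e$ to a standard basis vector, read $N/e$ off the top block and $N\setminus e$ off the full matrix, and take $T$ to be the support of the extra row.

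Two refinements are worth recording. First, a small well-definedness point: in the converse you realise $L$ as a splitting with respect to a representation $A'$ of $M$ that need not be the matrix $A$ you fixed at the outset; to conclude $L\cong M_T$ you should invoke unique representability of binary matroids (any two full-row-rank $GF(2)$ representations of $M$ on the same ground set are row equivalent, hence $M[A'_T]=M[A_T]$). Second, your closing caveat is not cosmetic but a genuine correction to the statement as it sits in this paper: the paper's definition of a lift places no representability restriction on the intermediate matroid $N$, and under that reading the ``only if'' direction is false. Your example is exactly right: $U_{2,4}/e=U_{1,3}$ and $U_{2,4}\setminus e=U_{2,3}$ are both binary, yet every splitting of $U_{1,3}$ is either $U_{1,3}$ itself or $U_{1,1}\oplus U_{1,2}$, never $U_{2,3}$. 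Hence the lemma is true only when the elementary lift is witnessed by a binary matroid $N$ --- the hypothesis under which you argue, and which the paper (and, implicitly, \cite{mundhe}) suppress.
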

\noindent Lemma 1.3 can be extended to $p$-matroids  by using the similar arguments used to prove it   in \cite{mundhe}. Thus a splitting matroid $M_{a,b}$ of $p$-matroid $M$ is an elementary lift of  $M.$ In this case the matroid $N$ is element splitting matroid $M'_{a,b}.$\\

\noindent In this paper, we define $es$-splitting operation on a simple and coloopless $p$-matroid, which is a single element extension of corresponding element splitting $p$-matroid, and study circuits, bases and connectivity of the resulting matroid.

\noindent A binary matroid is Eulerian if and only if it's dual matroid is bipartite. Equivalently, an Eulerian binary matroid can not have an odd cocircuit. The $es$-splitting matroid $M^e_{a,b}$of a bridge-less binary matroid $M$ always contains an odd cocircuit. Therefore $M^e_{a,b}$ can never be Eulerian. But when we consider matroids representable over $GF(p),(p>2),$ the $es$-splitting matroid of it contains an odd cocircuit and still can be Eulerian. In the present paper, we provide necessary and sufficient condition for an $es$-splitting $p$-matroid $M^e_{a,b}$ to be Eulerian. \\

\noindent A sufficient condition to obtain Hamiltonian $p$-matroid from Hamiltonian $p$-matroid under $es$-splitting operation is also provided. For undefined and standard terminologies, refer to \cite{ox}. 
\section{$es$-splitting operation on $p$-matroids}
The present section generalizes the $es$-splitting operation to a $p$-matroid $M,$ and characterizes circuits and bases of the $es$-splitting matroid.
\begin{definition}\label{d3}
	Let $M\cong M[A]$ be a $p$-matroid on ground set $E$ and $\{a,b\}\subset E$ with $e\in \{a,b\}.$ Let $A'_{a,b}$ represents the element splitting matroid $M'_{a,b}$ on $GF(p).$ Construct the matrix $A^e_{a,b}$ by adjoining an extra column $\gamma$ to the matrix $A'_{a,b}$ where $\gamma$ is subtraction$(e-z)$ of two column vectors corresponding to the elements $e$ and $z.$ Denote $M^e_{a,b}\cong M[A^e_{a,b}].$ The shift of $M$ to $M^e_{a,b}$ is called an $es$-splitting operation. We call the matroid $M^e_{a,b}$ as $es$-splitting $p$-matroid.  
\end{definition}

\begin{remark}
	For $p=2,$ the above definition coincides with the $es$-splitting operation on binary matroids \cite{AH1}.  
\end{remark}
\begin{remark}
	rank$(A)<$ rank$(A^e_{a,b})=$ rank$(A)+1.$ If the rank functions of $M$ and $M^e_{a,b}$ are denoted by $r$ and $r',$respectively, then $r(M) < r'(M^e_{a,b})= r(M) + 1.$
\end{remark}

\noindent Let $C_k=\{u_1, u_2,\ldots, u_l : u_i \in E, i=1,2,\dots,l \}$  be a circuit of $M$ for some positive integers $k,$ $l$ and $|C_k \cap \{a,b\}|=2.$ Suppose, without loss of generality, $u_1=a$,$u_2=b$. If there are non-zero constants $a_1^k, a_2^k, ..., a_l^k$  in $GF(p)$ such that $\sum_{i=1}^{i=l} a_i^k u_i \equiv 0 (mod~p)$ and $a_1^k + a_2^k \equiv 0 (mod~p),$ then we call $C_{k}$ a $p$-circuit of $M.$ However, if $|C_k \cap \{a,b\}|=1$ or $|C_k \cap \{a,b\}|=2$ and $a_1^k + a_2^k\not\equiv 0 (mod \: p),$ then we call $C_{k}$ an $np$-circuit of $M$.

\noindent We denote $\mathcal {C}_0=\{C\in \mathcal {C}(M): C$ is a $p$-circuit or $C\cap \{a,b\}=\phi\}$.

\begin{theorem}\label{circuits}
	Let $M=(E,\mathcal{C})$ be a $p$-matroid, and $M'_{a,b}$ be its element splitting $p$-matroid. $e\in \{a,b\}$ and $z, \gamma \notin E$. Then $\mathcal{C}(M^e_{a,b})= \mathcal{C}(M'_{a,b}) \cup \mathcal{C}_4 \cup \mathcal{C}_5 \cup \mathcal{C}_6 \cup \mathcal{C}_7 \cup \mathcal{C}_8\cup \Delta,$ where 	
	\begin{description}
		
		\item $\mathcal {C}_4=\{C\cup \{e, \gamma\}: C$ is an $np$-circuit and $e\notin C\}$;
		
		\item $\mathcal {C}_5=\{(C\setminus e)\cup \gamma: C$ is an $np$-circuit, $e\in C$ and $a\notin C\}$;
		
		\item $\mathcal {C}_6=\{(C\setminus e)\cup \{z, \gamma \}: C$ is a $p$-circuit and $e\in C\}$;

		\item $\mathcal {C}_7=\{C \cup \gamma : C$ is an $np$-circuit containing $e$ and $a$\};
		
		\item $\mathcal {C}_8=\{(C\setminus e) \cup \{z, \gamma\} : C$ is an $np$-circuit containing $e$ and $a$\} and
		
		\item $\Delta=\{e, z, \gamma\}.$
		
	\end{description}	
\end{theorem}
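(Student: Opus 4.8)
The plan is to work directly with the matrix $A^e_{a,b}$ and read dependences off its two blocks. Writing $\mathbf u$ for the column of $u\in E$ in the original matrix $A$ and $\alpha\neq 0$ for the entry used in the splitting row, the columns of $A^e_{a,b}$ are $\begin{pmatrix}\mathbf u\\ s_u\end{pmatrix}$ with $s_u=\alpha$ for $u\in\{a,b\}$ and $s_u=0$ otherwise, together with $z=\begin{pmatrix}\mathbf 0\\ \alpha\end{pmatrix}$ and $\gamma=e-z=\begin{pmatrix}\mathbf e\\ 0\end{pmatrix}$. Two facts are immediate. First, $M^e_{a,b}\setminus\gamma=M'_{a,b}$, so every circuit of $M^e_{a,b}$ avoiding $\gamma$ is a circuit of $M'_{a,b}$ and conversely; this accounts for the $\mathcal C(M'_{a,b})$ summand. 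Second, $\{e,z,\gamma\}$ is a triangle, since $e-\gamma-z=\mathbf 0$ while each of its $2$-subsets is independent (here we use that $M$ is loopless, so $\mathbf e\neq\mathbf 0$); this gives $\Delta$. A set $X$ is dependent in $M^e_{a,b}$ exactly when there are coefficients, not all zero, satisfying the top-block equation $\sum_{u\in X\cap E}c_u\mathbf u+c_\gamma\mathbf e=\mathbf 0$ (I) and the splitting-row equation $\sum_{u\in X\cap\{a,b\}}c_u+c_z=0$ (II), where $c_z$ is read as $0$ when $z\notin X$ and $\gamma$ contributes nothing to (II). It then remains only to determine the circuits properly containing $\gamma$ and larger than $\Delta$.

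For such a circuit $X$ one has $c_\gamma\neq 0$, so (I) exhibits a dependence of the original columns in which $\gamma$ contributes exactly like $e$. Collecting the $\mathbf e$-terms, (I) becomes a dependence in $M$ of the set $C$ obtained from $X\cap E$ by adjoining $e$ when $e\notin X$, by deleting $e$ when $e\in X$ and $c_e+c_\gamma=0$, and by keeping $X\cap E$ when $e\in X$ and $c_e+c_\gamma\neq 0$. Minimality of $X$ forces $C$ to be a single circuit of $M$ (the empty case returning $\Delta$). Equation (II) then plays two roles: it decides whether $z$ must be included, and it records the coefficient sum over $\{a,b\}$, which is precisely the quantity distinguishing $p$-circuits from $np$-circuits. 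Running through the three possibilities for $e$ against the $p/np$ type of $C$ produces the five families: $e\notin C$ with $C$ an $np$-circuit gives $\mathcal C_4$; $e\in C$, $a\notin C$, $C$ an $np$-circuit gives $\mathcal C_5$; $C$ a $p$-circuit through $e$ forces $z\in X$ and gives $\mathcal C_6$; and an $np$-circuit $C$ containing both $e$ and $a$ gives the pair $\mathcal C_7$ (retaining $e$, no $z$) and $\mathcal C_8$ (deleting $e$, adjoining $z$).

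The verification that each listed set is a circuit is a short explicit computation: starting from the defining dependence $\sum_{u\in C}a_u\mathbf u=\mathbf 0$ of the underlying circuit $C$ of $M$, one writes the induced coefficients on $\gamma$ and on $z$ and checks they are nonzero. This is exactly where the dichotomy does its work. For $\mathcal C_4$ the coefficient placed on $e$, equal to $-\sum_{u\in C\cap\{a,b\}}a_u$, is nonzero precisely because $C$ is an $np$-circuit; for $\mathcal C_7$ the coefficient on $\gamma$ comes out as a nonzero multiple of $a_a+a_b$, again forcing $C$ to be $np$; whereas for a $p$-circuit the relation $a_a+a_b=0$ makes the ``no-$z$'' option degenerate and instead forces $c_z\neq 0$, yielding $\mathcal C_6$. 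Minimality in each case follows since $C$ is a circuit of $M$ and every proper subset of the exhibited set becomes independent in $M^e_{a,b}$ after deleting the corresponding column, by independence in $M$ or in $M'_{a,b}$.

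The remaining and heaviest part is completeness: showing the classification is exhaustive and that no non-minimal configuration slips through. Concretely, one must check that in every circuit $X\ni\gamma$ the set $C$ produced by (I) is forced to be a genuine circuit of $M$ rather than merely a dependent set, and that the coefficient bookkeeping on $e$ and $z$—in particular the two branches $c_e+c_\gamma=0$ versus $c_e+c_\gamma\neq 0$, and whether (II) can be solved with $c_z=0$—admits no possibilities beyond $\mathcal C_4$–$\mathcal C_8$ and $\Delta$. I expect this coefficient-merging analysis on $e$ and $z$, tied to the $p$ versus $np$ condition, to be the main obstacle; once organised as above, each branch matches exactly one family and the union is seen to be both complete and irredundant.
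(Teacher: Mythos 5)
Your framework is sound and is essentially the same as the paper's underlying linear algebra: you represent the columns explicitly, observe $\gamma=(\mathbf{e},0)$, obtain $\mathcal{C}(M'_{a,b})$ from $M^e_{a,b}\setminus\gamma=M'_{a,b}$ and $\Delta$ from $e-z-\gamma=\mathbf{0}$, and sort the circuits through $\gamma$ by the two coefficient equations (I) and (II). Your verification that each member of $\mathcal{C}_4$--$\mathcal{C}_8$ is dependent, with the $p$/$np$ dichotomy supplying the required nonzero coefficients, also matches what the paper treats as the (easy) forward inclusion. The genuine gap is that you never carry out the converse, exhaustiveness direction: you assert ``minimality of $X$ forces $C$ to be a single circuit of $M$'' and then concede in your final paragraph that this step, together with the coefficient bookkeeping on $e$ and $z$, is the main obstacle which you \emph{expect} to work out. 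That deferred step is not a routine verification; it is the entire content of the paper's proof.

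The reason it is not routine is the following. For a circuit $X\ni\gamma$ of $M^e_{a,b}$, the merged set $C$ is a priori only a dependent set of $M$, and to contradict minimality of $X$ when $C$ properly contains a circuit $C'$ of $M$, you must convert $C'$ into a circuit of $M^e_{a,b}$ lying \emph{properly inside} $X$. Which conversion is legitimate depends on the type of $C'$ and on what $X$ contains: if $C'$ is an $np$-circuit avoiding $e$ one can use $C'\cup z$ (a circuit of $M'_{a,b}$) only when $z\in X$, or $C'\cup\{e,\gamma\}$ (type $\mathcal{C}_4$) only when $e\in X$; if $e\in C'$ one must further split on whether $a\in C'$ to invoke types $\mathcal{C}_5$, $\mathcal{C}_6$, $\mathcal{C}_7$ or $\mathcal{C}_8$, and in each branch check that the produced circuit is a proper subset of $X$. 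Note also that this machinery presupposes the forward inclusion already established, so the two directions interlock; this is exactly the paper's Case 1 and Case 2, Subcases (i)--(ii)(a)--(d). Until that case analysis is run, the classification could in principle admit circuits beyond $\mathcal{C}_4$--$\mathcal{C}_8$ and $\Delta$, so what you have written is an accurate outline of the paper's proof rather than a proof.
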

\begin{proof}
	By the Definition \ref{d3} and $M'_{a,b}=M^e_{a,b}|_{E\cup z},$ the inclusion  $ \mathcal{C}(M'_{a,b}) \cup \mathcal{C}_4 \cup \mathcal{C}_5 \cup \mathcal{C}_6 \cup \mathcal{C}_7 \cup \mathcal{C}_8\cup \Delta \subset \mathcal{C}(M^e_{a,b})$ follows. 
	
	\noindent To prove the other inclusion, assume $C \in \mathcal{C}(M^e_{a,b})$ and $\gamma \notin C.$ Then by Theorem 2.4 of \cite{elt}, $C \in \mathcal{C}(M'_{a,b}).$ Now assume that $\gamma \in C.$\\
	$\mathbf{Case~1:}$ $z \in C.$ Note that if $e\in C,$ then $C=\Delta=\{e, z, \gamma\}$. Suppose $e \notin C.$ If $C_1= C\setminus\{z,\gamma\} $ is a dependent set of $M$ containing an $np$-circuit, say $C_2.$ Then $C_2 \cup z$ is a circuit of $M^e_{a,b}$ contained in $C,$ which is not possible. Therefore $C_1$ is an independent set of $M.$
	Since the every coordinate of $e$ is equal to the corresponding coordinate of $\gamma,$ and all the coordinates of $z$ except the last one are zero, the set $C_1 \cup e $ is a dependent set of $M.$ Moreover $C_1 \cup e $ is a circuit of $M,$ otherwise $C_1 \cup e $ contains an $np$-circuit, say $C_3.$ And $C_3 \cup z $ is circuit of $M^e_{a,b}$ contained in $C$ which is not possible. If $C_1 \cup e $ is a $p$-circuit, then $C \in \mathcal {C}_6.$ And if $C_1 \cup e $ is an $np$-circuit, then $C \in \mathcal {C}_8.$

	\noindent $\mathbf{Case~2:} $ $z\notin C$\\
	\noindent$\mathbf{Subcase~(i):}$  $e \notin C\setminus \gamma.$  Then observe that $a$ is also not an element of $C.$  And by similar argument as in Case 1, $(C\setminus \gamma)\cup e$ is an $np$-circuit of $M.$ Therefore $C\in \mathcal{C}_5.$ \\
	\noindent $\mathbf{Subcase~(ii):}$ $e \in C\setminus \gamma$. Note that $C\setminus\gamma$ is a dependent set of $M,$ containing $a$ and one of the following  cases will occur.\\
	$\mathbf{(a)}$ If $C\setminus \gamma$ is a circuit of $M,$ then it must be an $np$-circuit. Therefore $C\in \mathcal{C}_7.$\\
	\noindent $\mathbf{(b)}$ If $C\setminus \gamma$ is not a circuit of $M,$ and $C\setminus \gamma=C_{np}\cup e,$ for some $np$-circuit $C_{np}$ excluding $e.$ Then  $C_{np} \cup \{e, \gamma\}=C.$ In this case $C\in \mathcal{C}_4.$ \\
	\noindent $\mathbf{(c)}$ If $C\setminus \gamma$ is not a circuit of $M,$ and $C\setminus \gamma=C_{np}\cup A,$ for some $np$-circuit $C_{np}$ excluding $e,$ and disjoint from $A \subset E(M),$ where $|A|\geq 2.$ Then $C_{np}\cup \{e, \gamma\}$ is the circuit, of type $\mathcal{C}_4,$ of $M^e_{a,b}$ contained in $C,$ a contradiction.\\
	\noindent $\mathbf{(d)}$ If $C\setminus \gamma$ is not an $np$-circuit of $M,$ and $C\setminus \gamma=C_{np}\cup A,$ for some $np$-circuit $C_{np}$ containing $e,$ and $A\subset E(M).$ If $a \notin C_{np},$ then $(C_{np}\setminus e) \cup \gamma $ is a circuit of type $\mathcal{C}_5$ contained in $C$, a contradiction. And if $a \in C_{np},$ then $C_{np} \cup \gamma, $ a circuit of type $\mathcal{C}_7,$ is contained in $C,$ which is not possible.\\
	Therefore we conclude that $C\in (\mathcal{C}(M'_{a,b}) \cup \mathcal{C}_4 \cup \mathcal{C}_5 \cup \mathcal{C}_6 \cup \mathcal{C}_7 \cup \mathcal{C}_8).$
	
\end{proof}
\begin{corollary}
	The $es$-splitting matroid $M^e_{a,b}$ is not a bipartite matroid.
\end{corollary}
\begin{proof}
	The proof follows from the fact, the $es$-splitting matroid $M^e_{a,b}$ contains the circuit $\Delta=\{e,z,\gamma\}$ of size three.
\end{proof}
\noindent The following example lists all the type of circuits of a $p$-matroid $M,$ and the corresponding $es$-splitting matroid $M^e_{a,b}.$

\begin{example}\label{ex1}
	Consider the ternary sparse paving matroid $P_8$ which has the representation given by matrix $A.$
	
	\begin{center}
		
		$\mathbf{A} =
		\begin{pNiceMatrix}%
		[first-col,
		first-row,
		code-for-first-col = \color{blue},
		code-for-first-row = \color{blue}]
		& 1 & 2 & 3 & 4 & 5 & 6 & 7 & 8   \\
		&  1 & 0 & 0 & 0 & 0 & 1 & 1 & 2  \\
		&  0 & 1 & 0 & 0 & 1 & 0 & 1 & 1  \\
		&  0 & 0 & 1 & 0 & 1 & 1 & 0 & 1  \\
		&  0 & 0 & 0 & 1 & 2 & 1 & 1 & 0  \\
		\end{pNiceMatrix} \qquad
		\mathbf{A^e_{1,4}} =
		\begin{pNiceMatrix}%
		[first-col,
		first-row,
		code-for-first-col = \color{blue},
		code-for-first-row = \color{blue}]
		&  1 & 2 & 3 & 4 & 5 & 6 & 7 & 8 & 9 & 10  \\
		&  1 & 0 & 0 & 0 & 0 & 1 & 1 & 2 & 0 & 0   \\
		&  0 & 1 & 0 & 0 & 1 & 0 & 1 & 1 & 0 & 0   \\
		&  0 & 0 & 1 & 0 & 1 & 1 & 0 & 1 & 0 & 0   \\
		&  0 & 0 & 0 & 1 & 2 & 1 & 1 & 0 & 0 & 1   \\
		&  1 & 0 & 0 & 1 & 0 & 0 & 0 & 0 & 1 & 0   \\
		\end{pNiceMatrix}$
	\end{center}
	\bigskip
	\noindent For $a=1$, $b=4,$ $e=4$ and $\alpha=1$ the representation of $es$-splitting matroid $M^e_{1,4}$ over $GF(3)$ is given by the matrix $A^e_{1,4}$.\\
	The collection of circuits of $M,$ $M_{1,4}$ and $M^e_{1,4}$ is given in the following table.\\

\bigskip
		\begin{longtable}{ | m{4.4cm}| m{4.5cm} | m{5.5cm} |}
			\hline
			\textbf{~~~~~~~~~Circuits of $M$}  & \textbf{~~~~~~~~Circuits of $M_{1,4}$} & \textbf{~~~~~~~~Circuits of $M^e_{1,4}$}\\
			\hline
			$\{1, 2, 4, 5, 6\},$ $\{1, 2, 4, 6, 8\},$ $\{1, 3, 4, 5, 7\},$ $\{1, 3, 4, 7, 8\},$ $\{1, 4, 5, 8\},$ $\{2, 3, 6, 7\},$ $\{2, 5, 6, 8\},$~~~~~~~~~ $\{3, 5, 7, 8\}$& 	$\{1, 2, 4, 5, 6\},$ $\{1, 2, 4, 6, 8\},$ $\{1, 3, 4, 5, 7\},$ $\{1, 3, 4, 7, 8\},$ $\{1, 4, 5, 8\},$ $\{2, 3, 6, 7\},$ $\{2, 5, 6, 8\},$~~~~~~~~ $\{3, 5, 7, 8\}$ & 	$\{1, 2, 4, 5, 6\},$ $\{1, 2, 4, 6, 8\},$ $\{1, 3, 4, 5, 7\},$ $\{1, 3, 4, 7, 8\},$ $\{1, 4, 5, 8\},$ $\{2, 3, 6, 7\},$ $\{2, 5, 6, 8\},$~~~~~~~~~~~~~~ $\{3, 5, 7, 8\}$ \\
			
			\hline

			$\{1, 2, 3, 5, 6\},$ $\{1, 2, 3, 5, 7\},$ $\{1, 2, 3, 8\},$ $\{1, 2, 4, 7\},$ $\{1, 2, 5, 7, 8\},$ $\{1, 2, 6, 7, 8\},$ $\{1, 3, 4, 6\},$ $\{1, 3, 5, 6, 8\},$ $\{1, 3, 6, 7, 8\},$ $\{1, 5, 6, 7\},$ $\{2, 3, 4, 5\},$ $\{2, 3, 4, 6, 8\},$ $\{2, 3, 4, 7, 8\},$ $\{2, 4, 5, 6, 7\},$ $\{2, 4, 5, 7, 8\},$ $\{3, 4, 5, 6, 7\},$ $\{3, 4, 5, 6, 8\},$ ~~~~~~$\{4, 6, 7, 8\}$ & $-$ &  $\{1, 2, 3, 5, 6, 9\},$ $\{1, 2, 3, 5, 7, 9\},$ $\{1, 2, 3, 8, 9\},$ $\{1, 2, 4, 7, 9\},$ $\{1, 2, 5, 7, 8, 9\},$ $\{1, 2, 6, 7, 8, 9\},$ $\{1, 3, 4, 6, 9\},$ $\{1, 3, 5, 6, 8, 9\},$ $\{1, 3, 6, 7, 8, 9\},$ $\{1, 5, 6, 7, 9\},$ $\{2, 3, 4, 5, 9\},$ $\{2, 3, 4, 6, 8, 9\},$ $\{2, 3, 4, 7, 8, 9\},$ $\{2, 4, 5, 6, 7, 9\},$ $\{2, 4, 5, 7, 8, 9\},$ $\{3, 4, 5, 6, 7, 9\},$ $\{3, 4, 5, 6, 8, 9\},$ ~~~~~~$\{4, 6, 7, 8, 9\}$ \\
			
			\hline
			
			$-$ & $-$ & $\{1, 2, 3, 4, 8, 10\},$~~~ $\{1, 4, 5, 6, 7, 10\}$\\
			
			\hline
			$-$ & $-$ & $\{2, 3, 5, 10\},$ $\{2, 3, 6, 8, 10\},$ $\{2, 3, 7, 8, 10\},$ $\{2, 5, 6, 7, 10\},$ $\{2, 5, 7, 8, 10\},$ $\{3, 5, 6, 7, 10\},$ $\{3, 5, 6, 8, 10\},$~~~~~~~~ $\{6, 7, 8, 10\}$\\
			
			\hline
			$-$ & $-$ & $\{1, 2, 5, 6, 9, 10\},$ $\{1, 2, 6, 8, 9, 10\},$ $\{1, 3, 5, 7, 9, 10\},$~~~~~$\{1, 3, 6, 9, 10\},$ $\{1, 3, 7, 8, 9, 10\},$~~~~~ $\{1, 5, 8, 9, 10\}$\\
			\hline
			
			$-$ & $-$ & $\{1, 2, 4, 7, 10\},$~~~~~~~ $\{1, 3, 4, 6, 10\}$\\
			\hline
			$-$ & $-$ & $\{1, 2, 7, 9, 10\}$\\
			\hline
			$-$ & $-$ & $\{4, 9, 10\}$\\
			\hline
		\end{longtable}	

\end{example}
\noindent Consider subsets of $E$ of the type $C\cup I$ where $C=\{u_1,u_2,\ldots,u_l\}$ is an $np$-circuit of $M,$ which is disjoint from an independent set $I=\{v_1,v_2,\ldots,v_k\}$ and $\{a,b\}\subset (C\cup I)$. We say $C\cup I$ is $p$-dependent if it contains no member of $\mathcal {C}_0$ and there are non-zero constants $\alpha_1,\alpha _2,\ldots,\alpha_l$ and $\beta_1, \beta_2,\ldots,\beta_k$ such that $\sum_{i=0}^{l}\alpha_i u_i + \sum_{j=0}^{k}\beta_j v_j = 0 (mod \: p)$ and $coeff.(a)+coeff.(b) = 0(mod \: p).$\\
\noindent We use $cl$ and $cl'$ to denote closure operators on $M$ and $M^e_{a,b},$ respectively.

\begin{theorem}\label{basis}
	Let $M$ be a $p$-matroid and $M^e_{a,b}$ be es-splitting matroid. $\mathcal{B}(M^e_{a,b})=\mathcal{B}(M'_{a,b})\cup \mathcal{B}_1\cup \mathcal{B}_2\cup \mathcal{B}_3,$ where
	\begin{description}
		\item $\mathcal{B}_1=\{B\cup \gamma:$ $B\in \mathcal{B}(M)$ and $e\in B$\};
		\item $\mathcal{B}_2=\{B\cup \gamma:$ $B\in \mathcal{B}(M),$ $a\in B$ and $e\notin$ cl$(B\setminus a)$ \};
		\item $\mathcal{B}_3=\{(C_{np}\cup I)\cup \gamma:$ $(C_{np}\cup I)$ is not a $p$-dependent set of $M,$ $rank(C_{np} \cup I)=rank(M)-1,$ $e\notin cl(C_{np} \cup I)$\};
		\item $\mathcal{B}_4=\{I\cup\{z,\gamma\}:$ $I\in \mathcal{I}(M),$ $e \notin cl(I),$$|I|=r(M)-1$ \}.
	\end{description}
\end{theorem}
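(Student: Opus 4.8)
The plan is to argue directly from the representing matrix $A^e_{a,b}$. Write $w_u\in GF(p)^r$ for the column of $u\in E$ in $A$ and $s_u\in\{0,\alpha\}$ for its entry in the splitting row (so $s_u=\alpha$ exactly when $u\in\{a,b\}$). Then the columns of $A^e_{a,b}$ are $v_u=(w_u,s_u)$ for $u\in E$, together with $z=(0,\alpha)$ and $\gamma=(w_e,0)$, and we record the two basic relations $v_e=\gamma+z$ and $r'(M^e_{a,b})=r(M)+1$, henceforth written $r+1$, as already observed. Since $M'_{a,b}=M^e_{a,b}|_{E\cup z}$ has the same rank $r+1$, a basis of $M^e_{a,b}$ avoiding $\gamma$ is precisely a maximal independent subset of $E\cup z$, i.e. a member of $\mathcal{B}(M'_{a,b})$. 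It therefore remains to identify the bases $B$ with $\gamma\in B$, and I would do this by writing $B=\{\gamma\}\cup T$ with $|T|=r$ and splitting into the cases $z\in T$ and $z\notin T$, refining the latter by $e\in T$ or $e\notin T$.

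First I would treat $z\in T$. Because $v_e=\gamma+z$, if $e\in T$ then $\{\gamma,z,v_e\}$ is dependent and $B$ is not a basis, so necessarily $e\notin T$; writing $T=I\cup z$, I would use $z=(0,\alpha)$ to clear the last coordinate of every other column and reduce independence of $B$ to independence of $\{w_e\}\cup\{w_u:u\in I\}$ in $GF(p)^r$. As this is a set of $r$ vectors, $B$ is a basis precisely when $I\cup e\in\mathcal{B}(M)$, that is, when $I\in\mathcal{I}(M)$, $|I|=r-1$ and $e\notin cl(I)$; this yields exactly $\mathcal{B}_4$. Next, for $z\notin T$ (so $T\subseteq E$) I would project to the first $r$ coordinates, where $\gamma$ maps to $w_e$. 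When $e\in T$, the vector $\gamma-v_e=-z$ is nonzero only in the last coordinate, so an elementary column operation shows $B$ is independent iff $\{z\}\cup T$ is, which holds iff $T\in\mathcal{B}(M)$; this gives $\mathcal{B}_1$. When $e\notin T$ and $T\in\mathcal{B}(M)$, I would expand $w_e=\sum_{u\in T}\lambda_u w_u$ and note that $\gamma\in\mathrm{span}\{v_u:u\in T\}$ iff $\sum_{u\in T}\lambda_u s_u=0$; since $e\notin T$ forces $T\cap\{a,b\}\subseteq\{a\}$ (writing $a$ for the element of $\{a,b\}$ distinct from $e$, as the hypotheses presuppose), this sum equals $\alpha\lambda_a$ when $a\in T$ and $0$ otherwise, so $B$ is a basis iff $a\in T$ and $\lambda_a\neq0$. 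Translating $\lambda_a\neq0$ through the basis-exchange property into $e\notin cl(T\setminus a)$ recovers exactly $\mathcal{B}_2$.

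The remaining and most delicate case is $z\notin T$, $e\notin T$, $T\notin\mathcal{B}(M)$. Here independence of $B$ forces $\mathrm{rank}\bigl(\{w_e\}\cup\{w_u:u\in T\}\bigr)=r$, which together with $T\notin\mathcal{B}(M)$ gives $\mathrm{rank}_M(T)=r-1$ and $e\notin cl(T)$; thus $T$ contains a unique circuit $C$ and $T=C\cup I$ with $I$ independent. The single projection-dependency $\sum_{u\in C}c_u w_u=0$ lifts to a genuine dependency of $B$ unless its last coordinate $\alpha\sum_{u\in C\cap\{a,b\}}c_u$ vanishes, so $B$ is a basis iff $\sum_{u\in C\cap\{a,b\}}c_u\neq0$. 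The crux is to convert this numerical condition into the matroidal language of the theorem: since $C\subseteq cl(T)$ and $e\notin cl(T)$ we have $e\notin C$, whence $C\cap\{a,b\}\subseteq\{a\}$, so $C$ is an $np$-circuit meeting $\{a,b\}$ exactly in $a$, and the non-vanishing of the coefficient sum is precisely the assertion that $T=C_{np}\cup I$ is \emph{not} $p$-dependent.

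I expect this final identification to be the main obstacle, since it is where the $p$-circuit/$np$-circuit bookkeeping and the closure condition $e\notin cl(T)$ must be handled together: one must rule out a circuit lying in $\mathcal{C}_0$ (which would make $T$ fail to be of the form $C_{np}\cup I$ and would also kill the last coordinate), and one must check that the unique dependency of $T$ carries the coefficient information correctly into the definition of $p$-dependence. Collecting the four cases then gives $\mathcal{B}(M^e_{a,b})=\mathcal{B}(M'_{a,b})\cup\mathcal{B}_1\cup\mathcal{B}_2\cup\mathcal{B}_3\cup\mathcal{B}_4$, the families being disjoint by construction according to whether $z\in B$, $e\in B$, or $T\in\mathcal{B}(M)$.
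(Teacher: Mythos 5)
Your proposal is correct, and it takes a genuinely different route from the paper. The paper's proof never returns to the representing matrix: it takes a basis $B'$ of $M^e_{a,b}$, splits into four cases according to membership of $z$ and $\gamma$, and in each case leans on two pieces of outside machinery --- Lemma 2.6 of the companion preprint \cite{elt} (the classification of independent sets of the element splitting matroid $M'_{a,b}$ as either independent sets of $M$ or sets $C_{np}\cup I$ that are not $p$-dependent) and the circuit classification of Theorem \ref{circuits} (used to exhibit a circuit of type $\mathcal{C}_4,\ldots,\mathcal{C}_8$ or $\Delta$ inside $B'$ whenever a case must be excluded); the reverse inclusion is then checked family by family in the same style. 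You instead work entirely inside $GF(p)^{r+1}$ with the explicit columns $v_u=(w_u,s_u)$, $z=(0,\alpha)$, $\gamma=(w_e,0)$, and in each of your cases you obtain an if-and-only-if criterion, so both inclusions fall out of one computation. Your approach buys self-containedness (no appeal to \cite{elt} or to Theorem \ref{circuits}) and, more importantly, it disambiguates the most delicate clause of the statement: in your last case you show that once $\mathrm{rank}(T)=r-1$ and $e\notin cl(T)$, the only possible dependency of $B$ comes from the unique circuit $C\subseteq T$, and the obstruction is exactly $\alpha c_a\neq 0$; since a circuit dependency has \emph{all} coefficients nonzero, this is simply the condition $a\in C$, i.e.\ that the unique circuit of $T$ is an $np$-circuit through $a$. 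Read literally, the paper's ``not $p$-dependent'' condition in $\mathcal{B}_3$ is vacuous in this situation (its definition presupposes $\{a,b\}\subseteq C_{np}\cup I$, while $e\notin cl(C_{np}\cup I)$ forces $e\notin T$), so your concrete reformulation is a genuine clarification rather than a deficiency; the remaining work you flag as ``the main obstacle'' does close, for precisely this reason. What the paper's approach buys is brevity and uniformity: the circuit types $\mathcal{C}_4$--$\mathcal{C}_8$, $\Delta$ drive all later results, so re-using them keeps a single bookkeeping system throughout the article.

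Two small repairs to your write-up. First, the phrase ``lifts to a genuine dependency of $B$ \emph{unless} its last coordinate vanishes'' has the implication inverted --- the projection dependency lifts precisely \emph{when} the last coordinate vanishes --- although the conclusion you then draw ($B$ is a basis iff $\sum_{u\in C\cap\{a,b\}}c_u\neq 0$) is the correct one, so this is only a slip of wording. Second, you should note that your final union $\mathcal{B}(M'_{a,b})\cup\mathcal{B}_1\cup\mathcal{B}_2\cup\mathcal{B}_3\cup\mathcal{B}_4$ differs from the displayed equation in the theorem, which omits $\mathcal{B}_4$; this is evidently a typo in the paper (its own proof produces $\mathcal{B}_4$ in Case 4 and verifies it in the converse), and your version is the correct statement.
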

\begin{proof}
	Let $B'\in \mathcal{B}(M^e_{a,b}).$ Then $|B'|=r(M)+1.$\\
	$\mathbf{Case~1:}$ $z,\gamma \notin B'.$ Then $B' \in \mathcal{B}(M_{a,b}).$\\
	$\mathbf{Case~2:}$ $z \in B',$$ \gamma \notin B'.$ Then $B' \in \mathcal{B}(M'_{a,b}).$\\
	$\mathbf{Case~3:}$ $z \notin B',$$ \gamma \in B'.$ Then $B'\setminus\gamma$ is an independent set of $M'_{a,b}.$ Denote $B'\setminus\gamma=B.$ By Lemma 2.6 of \cite{elt}, following two subcases are possible:\\
	$\mathbf{Subcase~(i):}$ $B \in \mathcal{I}(M).$ Since $|B|=r(M), B\in \mathcal{B}(M).$ Note that $a\in B$ or $e \in B.$ Otherwise $\gamma \in cl'(B),$ which is a contradiction to the fact  that $B'$ is basis of $M^e_{a,b}.$\\
	$\mathbf{(a)}$ $e \in B.$ Then $B'\in \mathcal{B}_1.$\\
	$\mathbf{(b)}$ $e\notin B.$ If $e \notin cl(B\setminus a),$ then $B'\in \mathcal{B}_2.$ Suppose $e\in cl(B\setminus a),$ there are column vectors in $B\setminus a,$ say $v_1,v_2, \ldots, v_j,$ such that $\{e,v_1,v_2, \ldots, v_j\}$ forms a circuit in $M.$ Since the last coordinate of  $v_1,v_2, \ldots, v_j,$ and $\gamma$ in $M^e_{a,b}$ are zero, and the coordinates of $e$ and the corresponding coordinates of $\gamma$ are identical implies $\{\gamma, v_1,v_2, \ldots, v_j\}$
	is dependent in $M^e_{a,b}.$ Thus $B'$ contains a dependent set, which is a contradiction. \\
	\noindent $\mathbf{Subcase~(ii):}$ $B = (C_{np}\cup I),$ where $(C_{np}\cup I)$ is not $p$ -dependent and it contains no union of two disjoint $np$-circuits.
	%$\mathbf{(a)}$ If $e\notin cl(C_{np}\cup I),$ then it contains no circuit of $M^e_{a,b}.$ Then $(B'\setminus\gamma) \cup \gamma$ is a member of $\mathcal{B}_3.$ \\
	Let $e\in(C_{np}\cup I).$ Then either $e\in C_{np}$ or $e\in I.$ Let $e \in C_{np}.$ If $a$ is also in $C_{np},$ then $C_{np} \cup \gamma$ is a  a circuit, of type $ \mathcal{C}_7,$ contained in $B',$ a contradiction. Consider $a \notin C_{np.}$ Then $(C_{np}\setminus e)\cup \gamma$ is a circuit, of type $ \mathcal{C}_5,$ contained in $B',$ a contradiction. Therefore $e \notin C_{np}.$ If $e \in I,$ then $C_{np} \cup \{e,\gamma\}$ is a circuit, of type $ \mathcal{C}_4,$ contained in $B',$ again a contradiction. Therefore $e \notin I.$ Furthermore, we claim that $e \notin cl(C_{np} \cup I).$ On the contrary suppose, $e \in cl(C_{np} \cup I).$ Then $(C_{np} \cup I) \cup e$ contains a circuit, say $C,$ and $e \in C.$ If $a \notin C,$ then $(C\setminus e) \cup \gamma$ is a dependent set contained in $B',$ which is a contradiction. Suppose $a \in C.$ Note that $a$ is also in $ C_{np}.$ Therefore, there exists a circuit $C' \subset (C \cup C_{np})-a.$ Note that $C'$ is not a $p$-circuit. Thus, $e \in C',$ and hence $(C'\setminus e) \cup \gamma $ is a dependent set in $B',$ a contradiction again. Therefore $e \notin cl(C_{np} \cup I).$ Thus, $B' \in \mathcal{B}_3.$ \\
	$\mathbf{Case~4:}$ $z \in B',$$ \gamma \in B'.$ As earlier denote $B'\setminus\gamma=B.$ Note that $e\notin B',$ otherwise $B'$ contains the circuit $\{e,z,\gamma\}.$ Observe that  $B $ is independent in $M'_{a,b}.$ By Lemma 2.6 of \cite{elt}, following three are possible subcases:\\
	$\mathbf{Subcase~(i):}$ $ B = I \cup z,I \in \mathcal{I}(M)$ and $|I|=r(M)-1.$ In this case, $B'\in \mathcal{B}_4.$ \\
	$\mathbf{Subcase~(ii):}$ $B \in \mathcal{I}(M).$ Thus $B'= B \cup \gamma.$ In this case, $B'\in \mathcal{B}_1$ or $B'\in\mathcal{B}_2$\\
	$\mathbf{Subcase~(iii):}$ $B = C_{np} \cup I,$  where $(C_{np}\cup I)$ is not $p$ -dependent and it contains no union of two disjoint $np$-circuits. Then $C_{np}\cup z$ is a circuit of $M^e_{a,b}$ contained in $B',$ a contradiction.
	Therefore $B'\in (\mathcal{B}(M'_{a,b})\cup \mathcal{B}_1\cup \mathcal{B}_2\cup \mathcal{B}_4).$\\
	\noindent Conversely, let $X\in (\mathcal{B}(M'_{a,b})\cup \mathcal{B}_1\cup \mathcal{B}_2\cup \mathcal{B}_3).$ We prove that $X$ is a basis of $M^e_{a,b}.$\\
	If $X\in \mathcal{B}(M'_{a,b}),$ then $|X|=r(M)+1.$ Moreover $X$ is independent in $M^e_{a,b},$ therefore $X$ is basis of $M^e_{a,b}.$ If $X \in \mathcal{B}_1,$ then $X=B\cup \gamma,$ for some $B\in \mathcal{B}(M)$ containing $e.$ Let $B = \{u_1,u_2,\ldots,u_l\}.$ Note that $B$ is independent in $M^e_{a,b}.$ If $X=B\cup \gamma$ is dependent in $M^e_{a,b},$ then it contains a circuit, say $C,$ and $\gamma \in C.$ Following are the three possibilities regarding $C$:\\
	$\mathbf{Case ~1}$ $C= B\cup \gamma =  \{u_1,u_2,\ldots,u_l,\gamma\}.$ Since $e\in B,$ then $a$ must be in $B.$ Assume $u_1=a, u_2=e,$ WLOG. There exists non-zero scalars $\alpha_1,\alpha_2,\ldots,\alpha_l,\alpha_{\gamma},$ such that $\alpha_1 u_1+ \alpha_2 u_2+ \ldots+\alpha_l u_l+ \alpha_{\gamma} u_{\gamma} \equiv 0(mod\:p).$ Since every coordinate of $e$ is equal to the corresponding coordinate of $\gamma,$ restricting the column vectors of $B$ to the matrix $A,$ we get $\alpha_1 u_1+ (\alpha_2 + \alpha_{\gamma}) u_2+ \ldots+\alpha_l u_l  \equiv 0(mod\:p).$ Therefore $B= \{u_1,u_2,\ldots,u_l\}$ is a dependent set of $M,$ a contradiction.\\
	$\mathbf{Case ~2}$ $C \subset B \cup \gamma,$ and $a,e \in C \setminus \gamma.$ Using the arguments as in $\mathbf{(a)},$ we get a contradiction. \\
	$\mathbf{Case ~3}$ $C \subset B \cup \gamma,$ and $a,e \notin C .$ Using the arguments as in $\mathbf{(a)},$ we get a contradiction.\\
	Next, assume $X \in \mathcal{B}_2.$ Then $X=B\cup \gamma,$ where $B\in \mathcal{B}(M),$ $a\in B$ and $e\notin$ cl$(B\setminus a).$ Note that $B$ is independent in $M^e_{a,b}.$ If $X=B\cup \gamma$ is dependent in $M^e_{a,b},$ then it contains a circuit, say $C,$ and $\gamma \in C.$ Then $C \in \mathcal{C}_5$ or $C \in \mathcal{C}_7.$ Since $e\notin cl(B\setminus a),$ and hence $ e\notin C.$ Therefore $C$ cannot be a member of  $\mathcal{C}_5$ and $\mathcal{C}_7.$ \\
	Now if $X \in \mathcal{B}_3,$ then $X = (C_{np}\cup I)\cup \gamma $ for some $np$-circuit $C_{np},$ independent set $I$ of $M.$ Note that $|(C_{np} \cup I)|=rank(M).$ We claim $(C_{np} \cup I)$ is a independent set in $M^e_{a,b}$\\
	\\
	\\
	 and $(C_{np}\cup I)$ is not a $p$-dependent set of $M,$ implies $|(C_{np}\cup I)|=rank(M),$ and it is independent in $M^e_{a,b}.$ Therefore $|X|=rank(M)+1.$ Here it is enough to prove that $X=(C_{np}\cup I)\cup \gamma$ is independent in $M^e_{a,b}.$ On the contrary assume that $X$ is not independent in $M^e_{a,b}.$ Then $X$ contains a circuit, say $C,$ passing through $\gamma.$ Note that $e,z \notin C.$  Therefore $C$ is a circuit of type $\mathcal{C}_5.$ This implies $e \in cl(C) \subset cl(C_{np}\cup I),$ a contradiction again. \\
	Let $X \in \mathcal{B}_4.$ Then $X=I \cup \{z,\gamma\},$ where $I \in \mathcal{I}(M).$ By Lemma 2.6 of \cite{elt} $I'=I \cup z$ is an independent set of $M'_{a,b},$ and hence of $M^e_{a,b}.$ Therefore it is enough to show that $I' \cup \gamma $ is an independent set of  $M^e_{a,b}.$ If not, then $I' \cup \gamma$ contains a circuit $C,$ such that $\gamma \in C.$ Since $e \notin X,$ $C$ cannot be a circuit of the type $\mathcal{C}_4,\mathcal{C}_7$ and $\Delta=\{e,z, \gamma\}.$ Assume $C \in \mathcal{C}_5.$ Then $C=(C'\setminus e)\cup \gamma,$ where $C'$ is an $np$-circuit containing $e,$ and $C'\subset I.$ Thus $e \in cl(I),$ a contradiction. We get the same contradiction, if $C \in \mathcal{C}_6, \mathcal{C}_8.$ Therefore $I'\cup \gamma $ is an independent set of size $rank(M)+1.$
\end{proof}	
\noindent The next theorem gives the rank function of a $es$-splitting matroid $M^e_{a,b}$ in terms of the rank function of corresponding $p$-matroid $M.$
	\begin{theorem}
		Let $r$ and $r'$ denote the rank functions of the $p$-matroids $M$ and $M^e_{a,b}$, respectively. Let $X \subseteq E(M).$ Then

			\begin{equation}\label{r1}
				r'(X)= \begin{cases}
					r(X)+1, \text{if}~ X ~ \text{contains an}~ np-\text{circuit;}~~~ \\
					r(X), \text{otherwise .}
					
				\end{cases}
			\end{equation}

            \begin{equation}\label{r2}
              r'(X\cup z) = r(X)+1~~~~~~~~~~~~~~~~~~~~~~~~~~~~~~~~~~~~~~~~~~
            \end{equation}

           \begin{equation}\label{r3}
           	 ~~~~~~~~~~~~~~~~~~~~~~~~r'(X~\cup ~\gamma) = \begin{cases}
           	 	r(X), ~\text{if}~~ a, e\notin X, \text{and}~ e\in cl(X);~ \\
           	 	r(X)+2, \text{if}~ e\notin cl(X)\text{and}~ X \text{ contains an np circuit;}\\
           	 	r(X)+1, \text{otherwise.}

           	 \end{cases}
           \end{equation}

	        \begin{equation}\label{r4}
	        	r'(X~\cup ~\{z,\gamma\}) = \begin{cases}
	        	r(X)+1, \text{if}~ e\in cl(X);\\
	        	r(X)+2, \text{if}~ e\notin cl(X);~~~~~~~~~~
	        	
	        	\end{cases}
	        \end{equation}
	
	\end{theorem}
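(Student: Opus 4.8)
The plan is to establish all four identities by direct linear algebra on the matrix $A^e_{a,b}$, reducing each to the question of whether adjoining one extra row or one extra column raises the rank. For $i\in E$ let $\hat A_i$ denote the column of $A^e_{a,b}$ indexed by $i$: its top part is the column $A_i$ of $A$ and its last entry is $s_i$, where $s_i=\alpha$ for $i\in\{a,b\}$ and $s_i=0$ otherwise; write $s[X]$ for the splitting row restricted to the columns in $X$. By Definition \ref{d3} the two new columns are $z$ (top part $0$, last entry $\alpha$) and $\gamma$ (top part $A_e$, last entry $0$), so $\gamma$ and $\hat A_e$ agree in every coordinate except the last. I will repeatedly use two facts over $GF(p)$: the row space of a matrix equals the annihilator of its null space (a row $w$ lies in the row space iff $w\lambda=0$ for every dependency $\lambda$), and the null space of $A[X]$ is spanned by the dependencies supported on the circuits of $M|X$.

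For \eqref{r1}, $r'(X)$ is the rank of $A[X]$ with the splitting row $s[X]$ adjoined, so $r'(X)=r(X)+1$ exactly when $s[X]$ is not in the row space of $A[X]$. Since $s[X]\lambda=\alpha(\lambda_a+\lambda_b)$ for a dependency $\lambda$, the row $s[X]$ lies in the row space iff $\lambda_a+\lambda_b=0$ for every dependency of $M|X$; as the null space is spanned by circuit dependencies, and a circuit $C\subseteq X$ contributes $\lambda_a+\lambda_b\neq 0$ precisely when $C$ is an $np$-circuit, this holds iff $X$ contains no $np$-circuit. This gives \eqref{r1}. For \eqref{r2}, the top rows restricted to the columns $X\cup z$ have rank $r(X)$ (the $z$-column is zero there), while the splitting row has entry $\alpha\neq 0$ in the $z$-column where every top row vanishes, so it is independent of the top rows and contributes exactly $1$; hence $r'(X\cup z)=r(X)+1$ for every $X$.

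For the remaining identities I write $r'(Y\cup\gamma)$ as $r'(Y)$ or $r'(Y)+1$ according as $\gamma\in cl'(Y)$ or not, and analyse membership in the span. Since the top part of $\gamma$ is $A_e$, matching top coordinates forces $e\in cl(X)$ as a necessary condition. For \eqref{r4}, with $z$ present the last coordinate of any top-matching combination can be corrected by a multiple of $z$, so $\gamma\in cl'(X\cup z)$ iff $e\in cl(X)$; combining with \eqref{r2} yields $r'(X\cup\{z,\gamma\})=r(X)+1$ when $e\in cl(X)$ and $r(X)+2$ when $e\notin cl(X)$, which is \eqref{r4}. For \eqref{r3}, $z$ is absent, so $\gamma\in cl'(X)$ requires a representation $A_e=\sum_{i\in X}c_iA_i$ with $c_a+c_b=0$ in the last coordinate. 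The attainable values of $c_a+c_b$ form a coset of the image of the functional $\lambda\mapsto\lambda_a+\lambda_b$ on the null space of $A[X]$, which (by the computation for \eqref{r1}) is onto $GF(p)$ when $X$ contains an $np$-circuit and is identically zero otherwise. Hence, when $X$ contains an $np$-circuit, $\gamma\in cl'(X)$ iff $e\in cl(X)$, giving $r'(X\cup\gamma)=r(X)+2$ when $e\notin cl(X)$ and $r(X)+1$ when $e\in cl(X)$; this settles the second line of \eqref{r3} and the $np$-circuit part of the third.

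The main obstacle is the no-$np$-circuit subcase of \eqref{r3}. Here $c_a+c_b$ takes a single value over all representations of $A_e$ on $X$, and one has $r'(X\cup\gamma)=r(X)$ precisely when that constant is $0$, and $r(X)+1$ otherwise. The delicate step is to identify exactly the configuration in which this constant vanishes and to match it to the membership condition in the first line of \eqref{r3}. The natural argument splits according to which of $a,e$ lie in $X$ and uses that $M$ is simple and coloopless: when $X\cap\{a,b\}=\varnothing$ every $s_i$ with $i\in X$ is zero, the last coordinate is automatically $0$, and $\gamma\in cl'(X)$ as soon as $e\in cl(X)$. I expect the real difficulty to be the complementary configurations, where one must determine whether membership of $a,e$ in $X$ by itself controls the constant $c_a+c_b$, or whether its vanishing is governed by the finer condition that $A_e$ admits a representation on $X$ avoiding $a$; reconciling the clean displayed condition with this finer one, and verifying that the three cases are genuinely exhaustive and mutually exclusive on simple coloopless $p$-matroids, is the crux, and is where I would concentrate the careful casework. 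Equations \eqref{r1}, \eqref{r2} and \eqref{r4} follow from the short arguments above.
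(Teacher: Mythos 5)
Your arguments for \eqref{r1}, \eqref{r2}, \eqref{r4}, and for the two cases of \eqref{r3} in which $X$ contains an $np$-circuit are correct, and they follow a genuinely different route from the paper's: the paper imports \eqref{r1}--\eqref{r2} from earlier work and settles \eqref{r3}--\eqref{r4} by checking which circuit types $\mathcal{C}_4$--$\mathcal{C}_8$, $\Delta$ from Theorem~\ref{circuits} can sit inside $X\cup\gamma$ or $X\cup\{z,\gamma\}$, whereas you compute directly with row spaces and null spaces of $A^e_{a,b}$. However, there is a genuine gap: the remaining case of \eqref{r3} --- $X$ containing no $np$-circuit and $e\in cl(X)$ --- is exactly where all the content of the first and third displayed lines lives, and you only name it as ``the crux'' without resolving it. As submitted, the proof is incomplete.

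The reason you could not close that case is that it cannot be closed in the displayed form: your ``finer condition'' is the correct one, and it does not agree with the theorem's case split. For $e\notin X$ and $X$ without $np$-circuits, your coset computation shows $\gamma\in cl'(X)$ iff $A_e$ is representable on $X$ with zero coefficient on $a$, i.e.\ iff $e\in cl(X\setminus a)$, and then $r'(X\cup\gamma)=r(X)$; this is strictly weaker than ``$a,e\notin X$ and $e\in cl(X)$.'' Concretely, over $GF(3)$ take $a=(1,0,0)^T$, $b=e=(0,1,1)^T$, $x_1=(0,1,0)^T$, $x_2=(0,0,1)^T$, $x_3=(1,1,1)^T$ (a simple, coloopless $p$-matroid, $\alpha=1$) and $X=\{a,x_1,x_2\}$: then $X$ is independent, so it contains no $np$-circuit, $a\in X$, and $e=x_1+x_2\in cl(X\setminus a)$. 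The theorem's third line predicts $r'(X\cup\gamma)=r(X)+1=4$, but in $A^e_{a,b}$ the column $\gamma$ equals the sum of the columns of $x_1$ and $x_2$, so $r'(X\cup\gamma)=3=r(X)$. (The paper's own proof of this case is inconsistent with its statement as well: it decides according to whether $X\cup e$ contains an $np$-circuit, which gives the correct value $r(X)$ in the example above, but that test is only necessary, not sufficient, for $\gamma\in cl'(X)$ --- it fails when every circuit of $X\cup e$ through $e$ also passes through $a$, e.g.\ $a=(1,0)$, $b=e=(0,1)$, $x=(1,1)$, $X=\{a,x\}$, where the true value is $r(X)+1$.) So the way to finish your argument is to prove the corrected case: for $X$ without $np$-circuits, $r'(X\cup\gamma)=r(X)$ iff $e\notin X$ and $e\in cl(X\setminus a)$, and $r(X)+1$ otherwise. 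Both directions follow from your coset argument together with the observation that any circuit of $(X\setminus a)\cup e$ through $e$ is an $np$-circuit avoiding $a$, hence yields a $\mathcal{C}_5$-circuit inside $X\cup\gamma$; but note that this corrects, rather than reproduces, the first and third lines of \eqref{r3}.
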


   \begin{proof}
    Equation (\ref{r1}) and (\ref{r2}) are proved in \cite{mjg} and \cite{elt}.\\
    \noindent To prove (\ref{r3}), consider $a, e\notin X$ and $e\in cl(X)$ then $X\cup e $ contains an $np$-circuit of $M$, say $C_{np}.$ Then $e \in C_{np}.$  Now $C_{np}\setminus e \subset X.$ In $X\cup \gamma,$$ C_{np}\setminus e$ forms a circuit, of type $\mathcal{C}_5,$ with $\gamma.$ Therefore $\gamma \in cl'(X).$ Thus $r'(X\cup \gamma)=r(X).$  \\
    \noindent Next, assume that $e\notin cl(X)$ and $X$ contains an $np$-circuit of $M$, say $C_{np}.$ Now by \ref{r1}, $r'(X)=r(X)+1.$ We claim that $\gamma \notin cl'(X).$ On the contrary assume that $\gamma \in cl'(X).$ Then there exists a circuit $ C \subset X \cup \gamma$ in  $M^e_{a,b},$ containing $\gamma.$ Such a circuit must be of the type $\mathcal{C}_4,\mathcal{C}_5$ or $\mathcal{C}_7.$ Therefore $e \in cl(C\setminus\gamma),$ that implies $e \in cl(X),$ a contradiction. Therefore $r'(X\cup \gamma) =(r(X)+1)+1= r(X)+2.$\\
   \noindent Suppose  $X$ contains an $np$-circuit $C_{np},$ and $e \in cl(X).$ Then there exists a circuit,say $C,$ containing $e.$ Such a circuit could be $p$ or $np$ circuit. In case when $C$ is an $np$-circuit, containing $e$ then we get a circuit of type $\mathcal{C}_5$ or $\mathcal{C}_7$ of $M^e_{a,b}$. Therefore $r'(X\cup \gamma)=r(X)+1.$  If $C$ is a $p$ circuit, then $\gamma$  cannot form a circuit with $C,$ since for such a circuit to form we need $z\in X\cup \gamma.$ Thus $\gamma \notin cl'(X)$ in $M^e_{a,b}.$ Therefore $r'(X\cup \gamma)=r(X)+1.$ \\
   \noindent Next, assume that $X$ contains no $np$-circuit in $M,$ and $e\in cl(X).$ If $X\cup e$ contains an $np$-circuit, then as discussed earlier $r'(X\cup \gamma)=r(X).$ If $X\cup e$ contains a $p$-circuit, then $\gamma \notin cl'(X),$ as $z \notin X\cup \gamma.$ Therefore $r'(X\cup \gamma)=r(X)+1.$ Further if $e \notin cl(X),$ then $\gamma \notin cl'(X).$ Therefore $r'(X\cup \gamma)=r(X)+1.$ \\
   \noindent To prove (\ref{r4}), consider $e \in cl(X)$ in $M.$ Note that $r'(X \cup z)=r(X)+1.$ Since $e \in cl(X), X \cup\{z,\gamma\}$ contains a circuits of the type $\Delta, \mathcal{C}_6,$ or $\mathcal{C}_8.$ Thus $ r'(X \cup\{z,\gamma\})=r'( X \cup z)=r(X)+1.$ If $e \notin cl(X),$ then  $ \gamma \notin cl'(X\cup z);$ otherwise there exist a circuit $C\subset X\cup z,$ containing $\gamma,$ for such a circuit to exists $e$ must be in $cl(C\setminus\{z,\gamma\})$ in $M.$ Therefore $e\in X,$ a contradiction. Thus $r'(X\cup \{z,\gamma\})=r(X)+2.$\\
\end{proof}
  \noindent Equation (\ref{r3}) and equation (\ref{r4}) can be verified with Ex.\ref{ex1} as follows:
  \begin{enumerate}
  	\item $r'(\{2,3,5\}\cup 10)=3=r(\{2,3,5\}).$
  	\item $r'(\{1,2,3,8\}\cup 10)=5=r(\{1,2,3,8\})+2.$
  	\item $r'(\{1,2,3,5,6\}\cup 10)=5=r(\{1,2,3,5,6\})+1.$
  	\item $r'(\{1,2,4,7\}\cup \{9,10\})=4=r(\{1,2,4,7\})+1.$
  	\item $r'(\{1,2,3,8\}\cup \{9,10\})=5=r(\{1,2,3,8\})+2.$
  \end{enumerate}
    
 \begin{corollary}
 In the $es$-splitting matroid $M^e_{a,b},$  $\{a,e,z\}$ forms a cocircuit.
 \end{corollary}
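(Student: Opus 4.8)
The plan is to identify $\{a,e,z\}$ as the complement of a hyperplane of $M^e_{a,b}$, since in any matroid the complement of a hyperplane is exactly a cocircuit. I would work with the representation $M^e_{a,b}=M[A^e_{a,b}]$ and let $\rho$ denote the last (i.e.\ $(r+1)$-th) row of $A^e_{a,b}$, where $r=r(M)$. By the constructions in Definitions \ref{def0}, \ref{def1} and \ref{d3}, this adjoined row carries the entry $\alpha$ exactly in the columns $a$, $b$ and $z$, and $0$ in every other column; in particular $\gamma=e-z$ has $\rho$-entry $\alpha-\alpha=0$. Writing the two (distinct) split elements as $a$ and $e$, so that $\{a,e\}=\{a,b\}$, the support of $\rho$ is precisely $\{a,e,z\}$. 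Set $\hat E=E\cup\{z,\gamma\}$ and $H=\hat E\setminus\{a,e,z\}=(E\setminus\{a,b\})\cup\{\gamma\}$; the goal is to prove that $H$ is a hyperplane of $M^e_{a,b}$.

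First I would pin down the rank of $H$. Every column indexed by $H$ has $\rho$-entry $0$, so these columns all lie in the coordinate subspace $W=\{x\in GF(p)^{r+1}:x_{r+1}=0\}$, which has dimension $r$. Projecting the columns of $H$ onto their first $r$ coordinates returns exactly the vectors $\{v_i:i\in E\setminus\{a,b\}\}\cup\{v_e\}$, the copy of $v_e$ coming from $\gamma$; since $e\in\{a,b\}$, this projected family equals the full column set $\{v_i:i\in E\}$ with a single original column deleted. Because $M$ is coloopless, that deleted element is not a coloop, whence the projection has rank $r$. Therefore $r'(H)=r=\dim W$, which forces $\mathrm{span}(H)=W$. (Alternatively $r'(H)$ can be read off from \ref{r3} with $X=E\setminus\{a,b\}$, where the subcases $e\in cl(X)$ and $e\notin cl(X)$ both collapse to $r$ once colooplessness is invoked; the representation argument merely avoids this case split.)

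Finally I would verify closure and corank one simultaneously. Each of the columns $a$, $e$, $z$ has $\rho$-entry $\alpha\neq0$, so none of them lies in $W=\mathrm{span}(H)$; consequently $r'(H\cup x)=r+1=r'(M^e_{a,b})$ for every $x\in\{a,e,z\}$. This shows at once that $cl'(H)=H$ (so $H$ is a flat) and that $r'(H)=r'(M^e_{a,b})-1$, i.e.\ $H$ is a hyperplane, and hence its complement $\{a,e,z\}$ is a cocircuit. The only genuinely nontrivial step is the rank computation $r'(H)=r$: deleting both $a$ and $b$ would by itself lower the projected rank, and what restores it is the reintroduced column $\gamma$, whose first $r$ coordinates reproduce $v_e$; colooplessness of $M$ is precisely the hypothesis guaranteeing that this restored rank is again $r$. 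Once that is in hand, the hyperplane/cocircuit duality finishes the proof with no further computation.
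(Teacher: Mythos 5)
Your proof is correct and takes essentially the same route as the paper: both identify the complement of $\{a,e,z\}$ as a hyperplane by observing that its columns are precisely those whose last coordinate vanishes (including $\gamma=e-z$), deduce $cl'(H)=H$ since $a$, $e$, $z$ have nonzero last coordinate, and invoke colooplessness of $M$ to produce a basis avoiding $a$, giving $r'(H)=r(M)=r'(M^e_{a,b})-1$. Your explicit projection onto the coordinate subspace $W$ is just a more careful rendering of the paper's identification of the span of the complement with that of $E(M)\setminus a$.
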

\begin{proof}
	Let $X = E(M^e_{a,b})\setminus \{a,e,z\}.$ We will show that $X$ is a hyperplane of $M^e_{a,b}.$ First note that the last coordinate of every vector in $X$ is zero and that of $a,e,$ and $z$ is $1.$ Therefore, no linear combination of vectors in $X$ yields $a,e,$ or $z.$ Thus, $a,e,z \notin cl'(X).$ Equivalently $cl'(X)= X.$ Next, we show that $r'(X)=r(M).$ As noted earlier that every vector in $X$ has last coordinate equal to zero, and every coordinate of $\gamma$ is equal to the corresponding coordinate of $e,$ therefore $X= E(M)\setminus a,$ and $r'(X)=r(X).$ Since $M$ is a coloopless, there exist a basis $B$ of $M$ such that $a \notin B.$ Then  $r'(X)=r(X)=|B|=r(M).$ Therefore $X$ is a hyperplane of $M^e_{a,b}.$\
	
	 %Every basis of $M$ contains $a.$ Then $X$ is a maximal non-spanning set and hence a hyperplane of $M.$ Therefore $r'(X)=r(X)=r(M)-1.$ Still remaining to prove $M^e_{a,b}$ can not have a flat of rank $r(M).$Therefore $a$ is not contained in any circuit of $M,$ and $r(M)=r(M\setminus a)+1.$ The 
	
\end{proof}

\section{Connectivity of $es$-splitting $p$-matroids}
Connectivity of a matroid is an important property in matroid theory. In particular, connectivity plays an significant role in the proof of Rota's conjecture \cite{GGW1, GGW2}. We recall the following lemma in this regard. 

\begin{lemma}\cite{GGW1} For each field $\mathcal F,$ each excluded minor for the class of $\mathcal F$-representable matroids is 3-connected.\end{lemma}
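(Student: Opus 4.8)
The plan is to argue by contradiction: I would assume that $M$ is an excluded minor for the class of $\mathcal{F}$-representable matroids but is \emph{not} 3-connected, and then show that $M$ would in fact be $\mathcal{F}$-representable, contradicting that it lies outside the class. The three ingredients I would assemble first are all standard closure properties of the class of $\mathcal{F}$-representable matroids (see \cite{ox}): (i) the class is minor-closed, so that by the very definition of an excluded minor \emph{every} proper minor of $M$ is $\mathcal{F}$-representable; (ii) the class is closed under direct sums, witnessed by placing representing matrices $A_1$ of $M_1$ and $A_2$ of $M_2$ in block-diagonal position; and (iii) the class is closed under $2$-sums, witnessed by gluing representations of the two parts along a common basepoint column.

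Next I would invoke the structural characterisation of low connectivity and split into two cases. If $M$ is disconnected, then $M=M_1\oplus M_2$ with $M_1,M_2$ nonempty, each $M_i$ being a restriction and hence a proper minor of $M$; by (i) both are $\mathcal{F}$-representable, and then (ii) forces $M$ itself to be $\mathcal{F}$-representable, a contradiction. If instead $M$ is connected but not 3-connected, then $M$ has a $2$-separation $(X,Y)$ with $|X|,|Y|\ge 2$, and by the standard decomposition theorem for connected non-3-connected matroids (Bixby, Cunningham--Edmonds, Seymour; see \cite{ox}) we may write $M=M_1\oplus_2 M_2$, where $M_1$ is supported on $X\cup\{p\}$ and $M_2$ on $Y\cup\{p\}$ for a new basepoint $p$. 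Since $|E(M_1)|=|X|+1$ and $|E(M_2)|=|Y|+1$ are both strictly smaller than $|E(M)|=|X|+|Y|$, and each $M_i$ is isomorphic to a minor of $M$ (obtained by contracting or deleting all but one element of the opposite side so as to produce the basepoint), both parts are proper minors. By (i) they are $\mathcal{F}$-representable, and then (iii) forces $M$ to be $\mathcal{F}$-representable, the same contradiction.

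The main obstacle I anticipate is ingredient (iii): verifying that $\mathcal{F}$-representability is genuinely preserved under $2$-sums, in tandem with confirming that the two parts $M_1,M_2$ are bona fide proper minors rather than merely abstract pieces of a decomposition. For the representability, the delicate point is that one must first rescale the given representations of $M_1$ and $M_2$ so that the columns assigned to the basepoint $p$ coincide, and then check that the rank and circuit structure of the glued matrix reproduce \emph{exactly} the $2$-sum $M_1\oplus_2 M_2$ and nothing coarser. For the minor claim, I would exhibit explicit sequences of contractions and deletions on the $Y$-side (respectively the $X$-side) that realise $M_1$ (respectively $M_2$) as a minor with the basepoint surviving as the final remaining element. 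The small degenerate matroids, for which 3-connectivity is settled by convention, I would dispatch separately at the outset, since such matroids are trivially $\mathcal{F}$-representable and therefore cannot be excluded minors in the first place.
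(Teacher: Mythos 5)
Your proof is correct, but note that the paper does not prove this lemma at all: it is quoted as a known result from \cite{GGW1}, where it is likewise treated as standard, so there is no in-paper argument to compare against. The argument you give is the standard folklore proof of this fact: an excluded minor has all proper minors $\mathcal{F}$-representable; failure of 3-connectivity yields a 1- or 2-separation, hence a decomposition $M=M_1\oplus M_2$ or $M=M_1\oplus_2 M_2$ into parts isomorphic to proper minors of $M$; and $\mathcal{F}$-representability is closed under direct sums and 2-sums, giving the contradiction. The two points you flag as delicate are indeed exactly the ones requiring citation rather than new work: that both parts of a 2-sum of a connected matroid are isomorphic to proper minors of it, and that $\mathcal{F}$-representability is preserved under 2-sums (via the parallel connection along a basepoint that is neither a loop nor a coloop, then deleting the basepoint); both are established in \cite{ox}, so your outline completes to a full proof with no gaps.
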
 

The es-splitting operation on an $n$-connected binary matroid may not yield an $n$-connected matroid for $(n \geq 3)$. Malavadkar et al. \cite{mdsncm}, proved that given an $n$-connected binary matroid $M$ of rank $r$, the resulting es-splitting binary matroid has an $n$-connected minor of rank-$(r+1)$ with $|E(M)|+1$ elements.

In this section, we show that the $es$-splitting operation on $p$-matroids preserves connectivity and 3-connectedness.
Let $M$ be a matroid having ground set $E,$ and $k$ be a positive integer. The $k$-separation of matroid $M$ is a partition $\{S, T\}$ of $E$ such that $|S|, |T|\geq k$ and $r(S) + r(T)- r(M) <k.$ For an integer $n \geq 2,$ we say $M$ is $n$-connected if $M$ has no $k$- separation, where $1 \leq k \leq n-1.$ 

\begin{lemma}
	If $M$ is a connected $p$-matroid, then the corresponding  $es$-splitting matroid $M^e_{a,b}$ is also connected. 
\end{lemma}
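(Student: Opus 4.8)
The plan is to argue by contrapositive through the structure of the ground set $E(M^e_{a,b}) = E \cup \{z, \gamma\}$. Recall that $M$ connected means $M$ has no $1$-separation, equivalently every pair of elements lies in a common circuit. I would establish connectivity of $M^e_{a,b}$ by showing every element of the enlarged ground set is connected (via a circuit) to some fixed element of $E$, and then invoke transitivity of the "lies in a common circuit" relation on a connected matroid. First I would recall that $M'_{a,b} = M^e_{a,b}|_{E \cup z}$, and that the element splitting matroid $M'_{a,b}$ is itself connected when $M$ is; this should follow from the earlier analysis in \cite{elt} (the splitting and element-splitting operations on connected $p$-matroids yield connected matroids), so I would cite that rather than reprove it. That reduces the problem to attaching the single new element $\gamma$.

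The key step is therefore to exhibit, for the new element $\gamma$, a circuit of $M^e_{a,b}$ containing $\gamma$ together with at least one element of $E \cup z$ that is already known to be connected to the rest. The cleanest witness is the triangle $\Delta = \{e, z, \gamma\}$, which is a circuit of $M^e_{a,b}$ by Theorem \ref{circuits}. Since $e \in E$ and $z$ both lie in the connected matroid $M'_{a,b}$, and $\Delta$ is a circuit linking $\gamma$ to both of them, the element $\gamma$ is in a common circuit with elements of the connected restriction $M'_{a,b}$. Because being-in-a-common-circuit generates connectivity as an equivalence relation, adjoining $\gamma$ via $\Delta$ merges $\gamma$ into the single connected component of $M'_{a,b}$. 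Hence $M^e_{a,b}$ has all of its elements in one component and is connected.

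The main obstacle I anticipate is justifying that $M'_{a,b}$ is connected to begin with; this is where the real content sits, and I would want to confirm it is genuinely available from \cite{elt} or reprove it directly. If it is not quotable, I would instead argue connectivity of $M^e_{a,b}$ from scratch by contradiction: suppose $\{S, T\}$ is a $1$-separation of $M^e_{a,b}$, so $r'(S) + r'(T) = r'(M^e_{a,b}) = r(M)+1$ with $S, T$ nonempty. Using the rank formulas \eqref{r1}--\eqref{r4} to control $r'$ on sets involving $z$ and $\gamma$, and noting $\Delta = \{e,z,\gamma\}$ is a circuit, the three elements $e, z, \gamma$ cannot be separated across the partition (a circuit is never split by a $1$-separation), so $\{e,z,\gamma\}$ lies wholly in $S$ or $T$; one then restricts to $E$ and derives a $1$-separation of $M$, contradicting connectedness of $M$. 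I would keep the circuit-based argument as the primary route since it is shortest, falling back on the rank-function contradiction only if the connectivity of $M'_{a,b}$ needs independent verification.
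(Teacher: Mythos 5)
Your proof is correct, and its skeleton matches the paper's: both arguments first quote \cite{elt} for the connectivity of the element splitting matroid $M'_{a,b}$ (the paper invokes Theorem 3.1 there, so your anticipated obstacle does not materialize --- it is genuinely citable), and both then use the triangle $\Delta=\{e,z,\gamma\}$ to bring the new element $\gamma$ into play. Where you differ is in the last step. The paper proves the stronger statement that \emph{every} $x\in E\cup z$ lies on a common circuit with $\gamma$: for $x\in\{e,z\}$ it uses $\Delta$, and for any other $x$ it takes a circuit $C$ of $M$ through $x$ and $e$ (which exists since $M$ is connected) and converts it, via the classification in Theorem \ref{circuits}, into a circuit of $M^e_{a,b}$ through $x$ and $\gamma$ --- namely $(C\setminus e)\cup\{z,\gamma\}$ of type $\mathcal{C}_6$ when $C$ is a $p$-circuit, and a member of $\mathcal{C}_5$, $\mathcal{C}_7$ or $\mathcal{C}_8$ when $C$ is an $np$-circuit. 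You instead attach $\gamma$ only once, through $\Delta$, and appeal to the standard fact that ``lies on a common circuit'' is an equivalence relation on the ground set (see \cite{ox}), together with the observation that circuits of the restriction $M'_{a,b}=M^e_{a,b}|_{E\cup z}$ are circuits of $M^e_{a,b}$; so all of $E\cup z$ sits in one class and $\gamma$ joins that class via $e$. Your route is shorter and bypasses the circuit classification almost entirely (only the single circuit $\Delta$ is needed from Theorem \ref{circuits}), while the paper's route is more explicit, producing a concrete circuit through $\gamma$ and each $x$ at the cost of a case analysis. Your fallback rank-function argument is unnecessary given that \cite{elt} is citable, but its key observation --- that a circuit can never be split across a $1$-separation --- is also sound.
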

\begin{proof}
	As $M$ is a connected $p$-matroid, by Theorem 3.1 of \cite{elt} the element splitting matroid $M'_{a,b}$ is also connected. So it is enough to prove that for every element $x\in E\cup z,$ there exists a circuit passing through $x$ and $\gamma.$\\
	$\mathbf{Case~1:}$ Suppose $x=z.$ Then $\Delta=\{e, z, \gamma\}$ is the required circuit.\\
	$\mathbf{Case~2:}$  Suppose $x\in E.$ If $x=e,$ then the circuit $\Delta=\{e, z, \gamma\}$ contains $x$ and $\gamma.$ Now assume $x \notin e,$ since $M$ is a connected matroid, there exist a circuit, say $C,$ passing through $x$ and $e.$ Such a circuit $C$ is a $p$-circuit or $np$-circuit. If $C$ is a $p$-circuit, then $(C\setminus e) \cup \{z,\gamma\}$ is the desired circuit passing through $x$ and $\gamma.$ Next assume $C$ is an $np$-circuit. Then the desired circuit passing through $x$ and $\gamma$ will be a member of one of the collection $\mathcal{C}_5,$ $\mathcal{C}_7$ or $\mathcal{C}_8.$ 
\end{proof}

 \noindent We use the following proposition to prove the next theorem:
 \begin{proposition}\cite{ox}
 If a matroid $M$ is $n$-connected matroid and $|E(M)|>2(n-1)$, then all circuits and all cocircuits of $M$ have at least $n$ elements.
 \end{proposition}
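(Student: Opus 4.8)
The plan is to establish the bound for circuits directly, by exhibiting a low-order separation whenever a short circuit exists, and then to deduce the cocircuit statement by matroid duality. First I would argue by contradiction: suppose $C$ is a circuit of $M$ with $m := |C| \le n-1$, and set $k := m$. I would then show that the partition $\{C, E\setminus C\}$ is a $k$-separation, which for $1 \le k \le n-1$ is forbidden in an $n$-connected matroid, giving the contradiction.

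To check that $\{C, E \setminus C\}$ is a $k$-separation I must verify the cardinality and the rank conditions. For the cardinalities, $|C| = m = k$ is immediate, while the hypothesis $|E(M)| > 2(n-1)$ together with $m \le n-1$ gives $|E(M)| > 2m$, hence $|E\setminus C| = |E(M)| - m > m = k$; this is the only place the size hypothesis is used, and it is exactly what guarantees that both blocks are large enough. For the rank condition I would use the defining property of a circuit, namely $r(C) = |C| - 1 = m - 1$, together with the monotonicity bound $r(E \setminus C) \le r(M)$. These give $r(C) + r(E\setminus C) - r(M) \le (m-1) + r(M) - r(M) = m - 1 < m = k$, which is precisely the required strict inequality. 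Since $1 \le k \le n-1$, this $k$-separation contradicts the $n$-connectedness of $M$, so no circuit of $M$ can have fewer than $n$ elements.

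For the cocircuit statement I would invoke duality. The key input is that the connectivity function is self-dual: for every $S \subseteq E$ one has $\lambda_{M}(S) = r_M(S) + r_M(E\setminus S) - r(M) = r_{M^*}(S) + r_{M^*}(E\setminus S) - r(M^*) = \lambda_{M^*}(S)$, which follows from the standard identity $r_{M^*}(S) = |S| + r_M(E\setminus S) - r(M)$. Consequently $M$ and $M^*$ have exactly the same $k$-separations, so $M^*$ is also $n$-connected; since $|E(M^*)| = |E(M)| > 2(n-1)$, the circuit bound already proved applies to $M^*$, and every circuit of $M^*$ has at least $n$ elements. As the cocircuits of $M$ are by definition the circuits of $M^*$, this yields the desired bound for cocircuits.

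The argument is short and essentially self-contained once the definition of $k$-separation is in hand; the only genuine obstacle is the bookkeeping that makes the partition a legitimate $k$-separation, and in particular the cardinality count for $E\setminus C$, where the hypothesis $|E(M)| > 2(n-1)$ is indispensable --- without it the block $E\setminus C$ could be too small to satisfy $|E\setminus C| \ge k$. The duality reduction is then routine, provided one records the self-duality of $\lambda$, which I would either cite from \cite{ox} or derive in one line from the rank formula for the dual.
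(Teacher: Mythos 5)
Your proof is correct. Note that the paper itself gives no argument for this proposition --- it is imported verbatim from Oxley \cite{ox} and used as a black box --- and what you have written (exhibiting $\{C, E\setminus C\}$ as a $k$-separation with $k=|C|$, using $r(C)=|C|-1$ and $r(E\setminus C)\le r(M)$, then handling cocircuits by the self-duality of the connectivity function $\lambda_M=\lambda_{M^*}$) is precisely the standard textbook proof of the cited result, with the size hypothesis correctly identified as what makes $|E\setminus C|\ge k$ hold.
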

 
\begin{theorem}
	Let $M$ be a 3-connected $p$-matroid and $|E(M)|>4$. Then $M^e_{a,b}$ is also a 3-connected matroid.
\end{theorem}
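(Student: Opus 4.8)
The plan is to show that $M^e_{a,b}$ has no $1$-separation and no $2$-separation, given that $M$ is $3$-connected with $|E(M)| > 4$. Since connectivity ($1$-connectedness, i.e.\ no $1$-separation) is already established in the preceding Lemma, the real content is ruling out a $2$-separation. I would argue by contradiction: suppose $\{S,T\}$ is a $2$-separation of $M^e_{a,b}$, so that $|S|,|T| \ge 2$ and $r'(S) + r'(T) - r'(M^e_{a,b}) \le 1$, where $r'$ is the rank function of $M^e_{a,b}$ computed in the preceding Theorem (the four-case formula for $r'(X)$, $r'(X\cup z)$, $r'(X\cup\gamma)$, $r'(X\cup\{z,\gamma\})$). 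The ground set is $E \cup \{z,\gamma\}$, so the new elements $z$ and $\gamma$ must be distributed between $S$ and $T$, and the natural case analysis is organized by how $z$, $\gamma$, $e$, and $a$ fall into the two parts.

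The first reduction I would make is to control the small elements. By the preceding Corollary, $\{a,e,z\}$ is a cocircuit of $M^e_{a,b}$, and $\Delta = \{e,z,\gamma\}$ is a circuit (a triangle) of $M^e_{a,b}$. By the Proposition just quoted from Oxley, if $M^e_{a,b}$ were $3$-connected with more than $4$ elements every circuit and cocircuit would have at least $3$ elements, which is consistent with the triangle $\Delta$ and the triad $\{a,e,z\}$; what I actually exploit is the contrapositive direction, namely that a $2$-separation forces circuits/cocircuits to behave in a constrained way. A cleaner tool is the standard fact that in a $2$-separation $\{S,T\}$, no circuit and no cocircuit can meet both sides in a way that violates the separation's submodularity slack; in particular a triangle or a triad must essentially lie on one side or straddle in a very limited fashion. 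I would use the triangle $\{e,z,\gamma\}$ to argue that $e,z,\gamma$ cannot be split arbitrarily, and the triad $\{a,e,z\}$ similarly, so that up to symmetry one may assume $z,\gamma$ (and then $e,a$) lie together in, say, $T$.

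Once the new elements are corralled onto one side, the strategy is to push the assumed $2$-separation of $M^e_{a,b}$ down to a $2$-separation (or a $1$-separation) of $M$ itself, contradicting $3$-connectedness of $M$. Concretely, write $S \subseteq E$ and $T = T_0 \cup \{z,\gamma,\dots\}$ where $T_0 \subseteq E$; then I would compute $r'(S)$ and $r'(T)$ via the rank formulas of the preceding Theorem and compare $r'(S)+r'(T)-r'(M^e_{a,b})$ with $r(S)+r(T_0)-r(M)$, using $r'(M^e_{a,b}) = r(M)+1$ (from the Remark on rank). Because each rank formula differs from the underlying $M$-rank by at most the explicit corrections ($+1$ for absorbing an $np$-circuit, $+1$ for $z$, $+1$ or $+2$ for $\gamma$ depending on whether $e \in cl(\cdot)$), the separation value for $M^e_{a,b}$ will be shown to dominate the corresponding value for $M$, so a $2$-separation of $M^e_{a,b}$ yields a $k$-separation of $M$ with $k \le 2$, forcing $|S| \ge 2$ and $|T_0| \ge 2$ to produce a genuine $2$-separation of $M$ — the contradiction.

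The main obstacle I anticipate is the bookkeeping in the boundary cases of the rank comparison, specifically the subcase where $e \in cl(S)$ or $e \in cl(T_0)$ and the subcase where one side is small (of size exactly $2$, possibly consisting entirely of new or of special elements such as $\{a,\gamma\}$ or $\{z,\gamma\}$). In those degenerate configurations the induced partition of $E$ may have a part of size $0$ or $1$, so instead of a clean $2$-separation of $M$ one only gets a $1$-separation or a trivial partition; I would handle these separately, using the hypothesis $|E(M)|>4$ to guarantee that $M$ still has enough elements that connectivity of $M$ (already a consequence of $3$-connectedness) is violated, or using the triangle/triad placement from the second step to exclude the configuration outright. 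Carefully verifying that the element count on each side survives the translation from $M^e_{a,b}$ to $M$ — i.e.\ that neither induced part of $E$ becomes too small — is the delicate point, and it is exactly where the bound $|E(M)|>4$ is needed.
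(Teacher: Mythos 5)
Your overall architecture is the same as the paper's: rule out a $1$-separation by the connectivity lemma, assume a $2$-separation $\{S,T\}$ of $M^e_{a,b}$, and use the rank formulas (\ref{r1})--(\ref{r4}) together with $r'(M^e_{a,b})=r(M)+1$ to push it down to a $1$- or $2$-separation of $M$. However, there are two genuine gaps. First, your ``corralling'' step rests on a non-fact: there is no standard principle that a triangle or a triad must lie essentially on one side of a $2$-separation; circuits and cocircuits cross $2$-separations freely. The legitimate tool in this direction is the element-moving lemma (if $x\in T\cap cl'(S)$ and $|T|\geq 3$, then $(S\cup x, T\setminus x)$ is again a $2$-separation), but it only applies when the side being shrunk stays of size at least $2$, so the split configuration $z\in S$, $\gamma\in T$ cannot in general be symmetrized away --- which is exactly why the paper treats it as a separate case (its Case 1) with direct rank computations rather than attempting any reduction.

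Second, and more seriously, your key quantitative claim --- that the separation value of $M^e_{a,b}$ ``dominates'' that of $M$ --- is false precisely in the cases that carry the content of the theorem. By equation (\ref{r3}), $r'(X\cup\gamma)=r(X)$ when $a,e\notin X$ and $e\in cl(X)$, so $\gamma$ can contribute nothing; and when one induced part of $E$ is a singleton $\{x\}$, the pushed-down inequality $r(\{x\})+r(E\setminus x)-r(M)<2$ holds trivially in any loopless, coloopless matroid, so no contradiction results. The paper closes these degenerate cases with an idea your proposal never invokes: since $M$ is $3$-connected, no two elements are in series, so there is a circuit containing exactly one of $a,e$ --- hence an $np$-circuit --- inside the large side, which forces the sharper estimate $r'(Y)\geq r(Y\setminus\gamma)+1$ and upgrades the pushed-down inequality to a $1$-separation of $M$, a genuine contradiction. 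Your plan defers all of these boundary configurations (``I would handle these separately''), but they are not peripheral bookkeeping: the subcases $|X|=2$, $|Y|=2$, $X=\{z,\gamma\}$, $X=\{z,\gamma,x\}$ constitute essentially the whole of the paper's proof, and without the $np$-circuit mechanism you have no means of closing them.
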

\begin{proof}
	We will show that $M^e_{a,b}$ cannot have a 1 and 2-separation. As $M^e_{a,b}$ is connected, it cannot have a 1-separation. Next assume that $M^e_{a,b}$ has a 2-separation $X,Y$ where $X,Y\subset E\cup\{z,\gamma\},$ $|X|,|Y|\geq 2,$ and 
	\begin{equation}\label{eq1}
		r'(X)+r'(Y)-r'(M^e_{a,b})<2
	\end{equation}
$\mathbf{Case~1:}$ $ z \in X$ and $\gamma \in Y$ \\
Let $X'=X\setminus z$ and $Y'=Y\setminus \gamma$\\
\noindent$\mathbf{Subcase~(i):}$ $|X|=2$ and $X=\{x,z\},$ where $x\in E(M).$ Then by equation (\ref{r2}), $r'(X)=r(X')+1.$ As $M$ is $3$-connected, no two element of $M$ are in series, which guarantees that there is a circuit containing $a$ or $e$ but not both. Such a circuit is an $np$-circuit in $Y$ and hence $r'(Y)=r(Y')+1,$ by equation (\ref{r1}) Therefore equation (\ref{eq1}) becomes $r(X')+r(Y')-r(M)<1,$ a contradiction to the fact that $M$ is $3$-connected.\\
$\mathbf{Subcase~(ii):}$ $|Y|=2$ and $Y=\{x,\gamma\},$ where $x\in E(M).$ Observe that $r'(X)= r(X')+1$ as $z\in X$, and by equation (\ref{r3}) $r'(Y)= r(Y')+1.$ In this case equation (\ref{eq1}) becomes $r(X')+r(Y')-r(M)<1,$ which implies $(X',Y')$ forms a $1$-separation of $M.$ A contradiction again.\\
$\mathbf{Subcase~(iii):}$ $|X|,|Y|>2$\\
Then $|X'|, |Y'|\geq 2$ and by equation (\ref{r2}) $r'(X) = r(X')+1,$ and by equation (\ref{r3}) $r'(Y)\geq r(Y').$ Therefore equation (\ref{eq1}) yields, $r(X')+r(Y')-r(M) < 2.$ That is $(X',Y')$ forms a $2$-separation of $M;$ a contradiction.\\
$\mathbf{Case~2:}$ $\{z,\gamma\}\subseteq X$\\
Let $X'=X\setminus\{z,\gamma\}$\\
$\mathbf{Subcase~(i):}$  $X=\{z,\gamma\}.$ Then $Y = E(M).$ Note that $r'(X)=2,$ and $r(Y)=r(M).$ Now equation (\ref{eq1}) gives, $2+ (r(Y)+1)-(r(M)+1)<2.$ Therefore, $r(Y)<r(M),$ which is not possible. \\
 $\mathbf{Subcase~(ii):}$ $X=\{z,\gamma, x\}$ where $x\in E(M)$ \\
 Here $X'=\{x\}$ we have\\
 \begin{center}
 	 $r'(X) =  \left\{ \begin{array}{rcl}
 	r(X')+1, & \mbox{~~if~$e \in cl(X')$}
 	\\ r(X')+2, & \mbox{~~if~$e \notin cl(X')$} 
 	\end{array}\right. $
 \end{center}
If $x=e,$ then $r'(X)=r(X')+1.$ Again using the fact that there is an $np$-circuit excluding $e$ contained in $Y,$ we have $r'(Y)=r(Y)+1.$ Thus equation (\ref{eq1}) yields $r(X')+r(Y)-r(M)<1$ which means $(X', Y)$ forms a $1$-separation of $M,$ a contradiction.\\
If $x\neq e,$ then $r'(X)=r(X')+2,$ and $r'(Y)=r(Y')+1.$ Again equation (\ref{eq1}) yields $r(X')+r(Y)-r(M)<0,$ which is a contradiction.\\
 $\mathbf{Subcase~(iii):}$ $|X|>3$\\
 Here $|X'|,|Y|\geq 2.$ If $Y$ contains an $np$-circuit of $M,$ then, by equation (\ref{r1}) $r'(Y)=r(Y)+1,$ and by equation (\ref{r4}) $r'(X)\geq r(X')+1.$ Therefore equation (\ref{eq1}) gives $r(X')+r(Y)-r(M)<1,$ which means $(X',Y)$ forms a $1$-separation of $M;$ a contradiction.\\
 If $Y$ contains no $np$-circuit of $M,$ then $r'(Y)=r(Y)$ and $r'(X)\geq r(X')+1.$ Again equation (\ref{eq1}) yields $r(X')+r(Y)-r(M)<2,$ implying $(X', Y)$ is a $2$-separation of $M,$ a contradiction.\\
 So we conclude that $M^e_{a,b}$ cannot have such $2$-separation. Hence $M^e_{a,b}$ is $3$-connected.
\end{proof}

\section{Applications}

The present section characterizes Eulerian $es$-splitting $p$-matroids $M^e_{a,b},(p>2).$ Also we provide a sufficient condition to yield Hamiltonian $p$-matroids from  Hamiltonian $p$-matroids under $es$-splitting operation.

\begin{theorem}\label{EU}
	Let $M$ be a $p$-matroid $(p>2),$ and $a,b \in E(M).$ Then the $es$-splitting matroid $M^e_{a,b}$ is Eulerian if and only if there exists a collection of circuits $\{ C_{np},C_1,\ldots,C_k \},$ where $C_{np}$ is an $np$-circuit containing $a,b,$  $E(M)= C_{np} \cup C_1\cup C_2\cup \ldots \cup C_k,$ and  all the circuits in the collection are pairwise disjoint, except $C_{np} \cap C_1 = \{b\}.$ 
	
\end{theorem}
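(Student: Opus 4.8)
The plan rests on the standard description of Eulerian matroids: $M^e_{a,b}$ is Eulerian exactly when its ground set $E(M)\cup\{z,\gamma\}$ admits a partition into circuits. The whole argument is organised around the cocircuit $K=\{a,e,z\}=\{a,b,z\}$ produced by the preceding Corollary (so here $e=b$), together with the circuit--cocircuit orthogonality relation $|C\cap K|\neq 1$ and the circuit description of Theorem~\ref{circuits}.

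For the ``if'' direction, suppose the collection $\{C_{np},C_1,\dots,C_k\}$ exists. Since $a\in C_{np}$ and $C_{np}\cap C_1=\{b\}$, we have $a\notin C_1$, so $C_1$ meets $\{a,b\}$ only in $b$ and is therefore an $np$-circuit; likewise each $C_i\ (i\ge 2)$ is disjoint from $\{a,b\}$. I would then exhibit the explicit partition
\[
E(M^e_{a,b})=\big(C_{np}\cup\{z\}\big)\ \sqcup\ \big((C_1\setminus b)\cup\{\gamma\}\big)\ \sqcup\ C_2\ \sqcup\cdots\sqcup\ C_k .
\]
Here $C_{np}\cup z$ is a circuit of $M'_{a,b}$, and hence of $M^e_{a,b}$, because $C_{np}$ is an $np$-circuit of $M$ (by the circuit structure of $M'_{a,b}$, Theorem~2.4 of \cite{elt}, or directly from the matrix $A'_{a,b}$); the set $(C_1\setminus b)\cup\gamma$ is a circuit of type $\mathcal C_5$; and each $C_i$ lies in $\mathcal C_0$ and so is a circuit of $M^e_{a,b}$. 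Disjointness, and the fact that every element of $E(M)\cup\{z,\gamma\}$ is covered exactly once, follow immediately from $C_{np}\cap C_1=\{b\}$ and the pairwise disjointness hypothesis. This partition certifies that $M^e_{a,b}$ is Eulerian.

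For the ``only if'' direction, assume $M^e_{a,b}$ is Eulerian and fix a partition $\mathcal P$ of its ground set into circuits. Applying orthogonality to the cocircuit $K=\{a,b,z\}$, each $D\in\mathcal P$ satisfies $|D\cap K|\in\{0,2,3\}$; since the sets $D\cap K$ partition the three-element set $K$, this forces a unique member $D^{\ast}\in\mathcal P$ with $K\subseteq D^{\ast}$, while every other member avoids $\{a,b,z\}$. Now $D^{\ast}$ contains $z$, and I would rule out $\gamma\in D^{\ast}$ via Theorem~\ref{circuits}: the only circuits containing both $z$ and $\gamma$ are $\Delta$ (which omits $a$) and those of types $\mathcal C_6,\mathcal C_8$ (which omit $e=b$), each contradicting $\{a,b\}\subseteq D^{\ast}$. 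Hence $\gamma\notin D^{\ast}$, so $D^{\ast}\in\mathcal C(M'_{a,b})$ has the form $C_{np}\cup z$ for an $np$-circuit $C_{np}$ of $M$ with $a,b\in C_{np}$. The element $\gamma$ then lies in a different circuit $D_\gamma$ which avoids $\{a,b,z\}$; inspecting the circuit types, the only one containing $\gamma$ but none of $a,b,z$ is $\mathcal C_5$, so $D_\gamma=(C_1\setminus b)\cup\gamma$ for an $np$-circuit $C_1$ of $M$ with $b\in C_1$ and $a\notin C_1$. Every remaining member of $\mathcal P$ avoids $\{a,b,z,\gamma\}$, hence lies in $\mathcal C_0$ and, avoiding $\{a,b\}$, is a circuit $C_i$ of $M$ disjoint from $\{a,b\}$. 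Deleting $z$ and $\gamma$ from $\mathcal P$ now yields $E(M)=C_{np}\cup(C_1\setminus b)\cup C_2\cup\cdots\cup C_k$, a union that is pairwise disjoint except for $C_{np}\cap C_1=\{b\}$, which is precisely the required collection.

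The main obstacle is the ``only if'' direction, and specifically the case analysis that pins down $D^{\ast}$ and $D_\gamma$. The decisive simplifying idea is the orthogonality argument: recognising $\{a,b,z\}$ as a cocircuit collapses the a priori many possibilities for how $z,\gamma,a,b$ distribute among the circuits of $\mathcal P$ into the single configuration above. Verifying that $C_{np}\cup z$ is genuinely a circuit rather than merely a dependent set, and matching the remaining circuits against the precise hypotheses defining $\mathcal C_5$ and $\mathcal C_0$, are the routine but essential book-keeping steps.
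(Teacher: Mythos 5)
Your proof is correct, and the ``if'' direction coincides with the paper's: you exhibit exactly the same partition $\{C_{np}\cup z,\ (C_1\setminus b)\cup\gamma,\ C_2,\dots,C_k\}$ and justify each piece via the circuit classification. The ``only if'' direction, however, is genuinely different in its organization. The paper proceeds by exhaustive case analysis on the type of the circuit $C'_1$ containing $\gamma$: it rules out $\Delta$, $\mathcal C_4$, $\mathcal C_6$, $\mathcal C_7$, $\mathcal C_8$ one by one (in each case showing some element of $\{a,e,z\}$ cannot be covered by any legitimate circuit), and then, within the surviving case $C'_1\in\mathcal C_5$, runs a further subcase analysis on how $a$, $e$, $z$ distribute among the remaining circuits, eliminating all configurations except the one where a single circuit of type $\mathcal C_z$ contains all three. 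You instead invoke the cocircuit $\{a,e,z\}$ from the preceding Corollary together with circuit--cocircuit orthogonality: since each block of the circuit decomposition meets this $3$-element cocircuit in $0$, $2$, or $3$ elements and the blocks partition it, exactly one block $D^\ast$ contains all of $\{a,b,z\}$ and the rest avoid it, after which Theorem~\ref{circuits} pins down $D^\ast$ as $C_{np}\cup z$ and the block through $\gamma$ as a $\mathcal C_5$-circuit in just a few lines. This collapses the paper's two layers of case analysis into a single forcing argument; what it costs is the dependence on the cocircuit corollary (so your argument is less self-contained within the circuit classification), and what it buys is a shorter, more conceptual proof that also explains \emph{why} the configuration in the theorem is the only possible one. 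Both arguments are complete and compatible with the paper's conventions (in particular, both fix $e=b$).
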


\begin{proof}
	First assume that $M^e_{a,b}$ is Eulerian $p$-matroid, and $E(M^e_{a,b})=C'_1 \cup C'_2\cup C'_3\ldots \cup C'_k,$ where $C'_1, C'_2, \ldots,C'_k$ are disjoint circuits of $M^e_{a,b}.$ Set $e=b$. Without loss of generality, we assume $\gamma \in C'_1.$\\
	$\mathbf{Case~(i):}$ $C'_1=\Delta=\{e, z, \gamma\}$ and $a\in C'_2.$\\
	Since $\gamma, \notin C'_2,$ $C'_2$ is not a circuit of type $ \mathcal{C}_i; i=4,5,6,7,8. $ Also $C'_2\notin \mathcal{C}_z,$ as $z \notin C'_2.$ No circuits of $M_{a,b}$ contains only $a,$ therefore $C'_2\notin \mathcal{C}(M_{a,b})$.Therefore $C'_2 $ is not a circuit of $M^e_{a,b},$ a contradiction.\\
	\noindent $\mathbf{Case~(ii):}$ $C'_1\in \mathcal{C}_4,$ \\
	Then $e, a, \gamma \in C'_1.$ Assume $z\in C_2'.$ Since $\gamma \notin C'_2,$ $C'_2$ is not a circuit of type $ \mathcal{C}_i; i=4,5,6,7,8$ and $C'_2\neq \Delta.$ Also $C_2'\notin \mathcal{C}_z,$ as both $a,e \in C'_1.$  No circuit of $M_{a,b}$ contains $z,$ therefore $C_2'\notin \mathcal{C}(M_{a,b}).$ Therefore $C'_2 $ is not a circuit of $M^e_{a,b},$ a contradiction.\\
	\noindent By similar arguments as in $\mathbf{Case~(i)},$$\mathbf{Case~(ii)},$ we get a contradiction if $C'_1\in \mathcal{C}_6,$ or $ C'_1\in\mathcal{C}_7,$ or $C'_1\in \mathcal{C}_8.$\\
	\noindent $\mathbf{Case~(iii):}$ $C'_1\in \mathcal{C}_5,$\\
	$C'_1=(C\setminus e)\cup \gamma,$ where $C$ is an $np$-circuit containing $e$ but not $a.$ Thus  $z, e, a\notin C'_1.$ Since $\gamma \notin C'_2,$ $C'_2$ is not a circuit of type $ \mathcal{C}_i; i=4,5,6,7,8,$ and $C'_2\neq \Delta.$ Assume,WLOG, $a \in C'_2.$ Then following are the possible subcases:\\
	\noindent $\mathbf{Subcase~(i):}$ $e,z \in C'_2.$\\
	Observe that $a,e,z \in C'_2.$ As $z \in C_2'$ but $\gamma \notin C_2',$  $C'_2$ is a circuit of type $\mathcal{C}_z.$ Therefore $C'_2\setminus z,$ is an $np$-circuit. Set $C_{np}= C'_2\setminus z.$ Note that $C$ and $C_{np}$ are $np$-circuits of $M$ and $C \cap C_{np}=\{e\}.$  Thus the collection $\{ C, C_{np}, C'_3,\ldots,C'_k \}$ is the desired collection of the circuits of $M,$ where all the circuits are pairwise disjoint, except  $C \cap C_{np}=\{e\}.$  \\
	\noindent $\mathbf{Subcase~(ii):}$ $e\in C'_2, $ but $z \notin C'_2.$\\
	Let $z \in C'_i$ for some $i \in \{ 3,4,\dots,k\}.$ Since $a,e \notin C_i,$ $C_i$ cannot be a circuit of $M^e_{a,b},$ a contradiction.\\
	\noindent $\mathbf{Subcase~(iii):}$ $e,z\notin C'_2.$ \\
	Let $e \in C'_i$ and $z \in C'_j$ for some $i\neq j \in \{ 3,4,\dots,k\}.$ Since $a,e \notin C_j,$ $C_j$ cannot be a circuit of $M^e_{a,b},$ a contradiction.
	
  \noindent Conversely, suppose there exists a collection of circuits $\{ C_{np},C_1,\ldots,C_k \},$ where $C_{np}$ is an $np$-circuit containing $a,b,$  $E(M)= C_{np} \cup C_1\cup C_2\cup \ldots \cup C_k,$ and  all the circuits in the collection are pairwise disjoint, except $C_{np} \cap C_1 = \{b\}.$ Note that the circuits $C_2,C_3, \ldots,C_k $ are $p$-circuits, and $C_{np}$ and $C_1$ are $np$-circuits. Therefore $\{C_2,\ldots,C_k \} \subset \mathcal{C}(M^e_{a,b}).$ Observe that $e \in C_1,$ but $a \notin C_1.$ Thus $C'_1=(C_1 \setminus e) \cup \gamma $ forms a circuit of type $\mathcal{C}_5$ in $M^e_{a,b},$ and $C_{np} \cup z$ forms a circuit of type $\mathcal{C}_z.$ The collection $\{ C_{np} \cup z,C'_1,C_2, C_3,\ldots,C_k \}$ gives a circuit decomposition of $E(M^e_{a,b}).$ Therefore, $M^e_{a,b}$ is Eulerian.
\end{proof}

 \noindent Theorem \ref{EU} can be verified with Eulerian $p$- matroid discussed in Example \ref{ex1}. Here $a=1,b=e=4.$ Take $C_{np}=\{1,2,4,7\}$ and $C_1=\{3,4,5,6,8\}.$ Note that,$1,4\in C_{np},$$ C_{np}\cap C_1=\{4\},$ and $E(M)=C_{np}\cup C_1.$ The $es$-splitting matroid $M^e_{1,4}$ is Eulerian with circuit decomposition $\{\{1,2,4,7,9\},\{3,5,6,8,10\}\}=\{C_{np}\cup z, (C_1\setminus e ) \cup \gamma \}.$\\
 
 \noindent In the next example we verify Theorem \ref{EU} for a non-Eulerian $p$-matroid.
 
 \begin{example} \label{counter}
 	Consider the vector matroid $M \cong M[A]$ represented by the matrix $A$ over field $GF(5)$.
 	
 	\begin{center}
 		$\mathbf{A} = 
 		\begin{pNiceMatrix}%
 			[first-col,
 			first-row,
 			code-for-first-col = \color{black},
 			code-for-first-row = \color{black}]
 			& 1 & 2 & 3 & 4 & 5 \\
 			&  1 & 0 & 0 & 1 & 1\\
 			&  0 & 1 & 0 & 1 & 1 \\
 			&  0 & 0 & 1 & 1 & 0 \\
 			
 		\end{pNiceMatrix} \qquad
 		\mathbf{A^e_{3,5}} = 
 		\begin{pNiceMatrix}%
 			[first-col,
 			first-row,
 			code-for-first-col = \color{black},
 			code-for-first-row = \color{black}]
 			& 1 & 2 & 3 & 4 & 5 & 6 & 7\\
 			&  1 & 0 & 0 & 1 & 1 & 0 & 1\\
 			&  0 & 1 & 0 & 1 & 1 & 0 & 1\\
 			&  0 & 0 & 1 & 1 & 0 & 0 & 0\\
 			&  0 & 0 & 1 & 0 & 1 & 1 & 0\\

 		\end{pNiceMatrix}$
 	\end{center}
 	
 	\noindent The circuits of $M$ are $\{1,2,3,4\},\{1,2,5\},\{3,4,5\}.$ Therefore  $M$ is a non-Eulerian matroid. Consider $es$-splitting matroid $M^e_{3,5} \cong M[A^e_{3,5}]$ corresponding to $a=3$, $b=e=5.$ Take $C_{np}= \{3,4,5\}, C_1= \{1,2,5\}.$ Note that $C_{np}\cap C_1= \{5\},$ and $C_{np}\cup C_1=E(M).$ The resulting $es$-splitting matroid $M^e_{3,5}$ is Eulerian with circuit decomposition $\{\{3,4,5,6\}, \{1,2,7\}\}= \{C_{np}\cup z, (C_1\setminus e) \cup \gamma\}.$
 	
 \end{example}

\begin{proposition}
	Let $M$ be a $p$-matroid containing an $np$-circuit $C_{np}$ of size $r(M)+1.$ Then $M^e_{a,b}$ is Hamiltonian. In this case Hamiltonian circuits in $M^e_{a,b}$ are:
	\begin{enumerate}
		\item $C_{np}\cup z,$
		\item  $C_{np}\cup \gamma,$ if $a,e \in C_{np}.$
	\end{enumerate}

\end{proposition}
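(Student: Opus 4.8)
The plan is to show that a Hamiltonian circuit of $M^e_{a,b}$ must have size $r'(M^e_{a,b})+1 = r(M)+2$, and to exhibit circuits of exactly this size that span the whole ground set $E \cup \{z,\gamma\}$. Recall from Remark~2.6 that $r'(M^e_{a,b}) = r(M)+1$, so a spanning circuit (a Hamiltonian circuit) of $M^e_{a,b}$ has cardinality $r(M)+2$. By hypothesis we are given an $np$-circuit $C_{np}$ of $M$ with $|C_{np}| = r(M)+1$. The first step is to verify that $C_{np}\cup z$ and (when $a,e\in C_{np}$) $C_{np}\cup\gamma$ are indeed circuits of $M^e_{a,b}$, and the second step is to check that each of them has size $r(M)+2$ and spans $E\cup\{z,\gamma\}$, i.e.\ that it is Hamiltonian.

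First I would handle $C_{np}\cup z$. Since $C_{np}$ is an $np$-circuit of $M$, the set $C_{np}\cup z$ is a circuit of the element splitting matroid $M'_{a,b}$ (this is exactly the membership in the collection $\mathcal{C}_z$ referenced throughout Section~2, arising because an $np$-circuit acquires $z$ under element splitting), and since $M'_{a,b} = M^e_{a,b}|_{E\cup z}$, it remains a circuit in $M^e_{a,b}$. Its size is $|C_{np}| + 1 = (r(M)+1)+1 = r(M)+2 = r'(M^e_{a,b})+1$, so it is a spanning circuit, hence Hamiltonian. For the second circuit, when $a,e\in C_{np}$, the set $C_{np}\cup\gamma$ is a circuit of type $\mathcal{C}_7$ by Theorem~\ref{circuits}, since $\mathcal{C}_7 = \{C\cup\gamma : C \text{ is an } np\text{-circuit containing } e \text{ and } a\}$. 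Again its size is $|C_{np}|+1 = r(M)+2$, so it too is a Hamiltonian circuit of $M^e_{a,b}$.

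The only genuine content is the size computation, and the main thing to be careful about is the verification that these spanning circuits actually use the full rank, i.e.\ that a circuit of size $r'(M^e_{a,b})+1$ really is Hamiltonian in the sense intended. I would note that a circuit is spanning precisely when its complement contains no cocircuit, equivalently when the circuit has $r'+1$ elements in a connected matroid; since $|C_{np}\cup z| = |C_{np}\cup\gamma| = r'(M^e_{a,b})+1$ equals the ground-set size needed to force spanning, the circuit meets every basis-complementary cocircuit and the matroid is covered by a single circuit together with its spanning property. The expected obstacle, if any, is bookkeeping: confirming that appending $z$ to an $np$-circuit always yields a circuit (not merely a dependent set properly containing a smaller circuit) and, in the case $a,e\in C_{np}$, confirming the $\mathcal{C}_7$ membership rather than a spurious smaller circuit inside $C_{np}\cup\gamma$. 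Both follow from the circuit characterization of Theorem~\ref{circuits} together with the fact that $C_{np}$ is already a circuit of $M$, so no proper subset of $C_{np}$ is dependent.
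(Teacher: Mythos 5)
Your proof is correct. The paper in fact states this proposition with no proof at all, so there is nothing of the authors' to compare against; your write-up supplies the missing argument, and it does so using exactly the tools the paper itself provides. Specifically: (i) $C_{np}\cup z$ is a circuit of the element splitting matroid $M'_{a,b}$ (the type the paper calls $\mathcal{C}_z$ and itself invokes in the proof of Theorem \ref{EU} and in Case 4 of the bases theorem), hence a circuit of $M^e_{a,b}$ because $M'_{a,b}=M^e_{a,b}|_{E\cup z}$; (ii) when $a,e\in C_{np}$, the set $C_{np}\cup\gamma$ is a circuit of type $\mathcal{C}_7$ by Theorem \ref{circuits}; (iii) each has cardinality $|C_{np}|+1=r(M)+2=r'(M^e_{a,b})+1$, and a circuit whose cardinality exceeds the rank of the matroid by one is spanning, so both are Hamiltonian circuits and $M^e_{a,b}$ is Hamiltonian.

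One sentence of yours should be cleaned up rather than kept: a circuit $C$ of $M^e_{a,b}$ is spanning if and only if $|C|=r'(M^e_{a,b})+1$, and this needs no connectivity hypothesis --- a circuit has rank $|C|-1$, so $|C|=r'(M^e_{a,b})+1$ gives $r'(C)=r'(M^e_{a,b})$ and hence $cl'(C)=E(M^e_{a,b})$. The surrounding clause about ``basis-complementary cocircuits'' and the matroid being ``covered by a single circuit'' is both unnecessary and misleading, since a Hamiltonian matroid need not have its whole ground set equal to a circuit. With that sentence deleted, your argument is complete and is presumably the one the authors intended.
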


\end{document}